\newtheorem{theorem}{Theorem}[section]
\newtheorem{lemma}{Lemma}[section]
\newtheorem{proposition}{Proposition}[section]
\newtheorem{definition}[theorem]{Definition}
\newtheorem{remark}[theorem]{Remark}
\newcommand{\Gr}{\mathrm{Gr}}
\DeclareMathOperator{\PGL}{PGL}
\DeclareMathOperator{\GL}{GL}
\DeclareMathOperator{\Spec}{Spec}
\newcommand{\SB}{\mathrm{SB}}
\newcommand{\RI}{\mathrm{RI}}
\newcommand{\AZ}{\mathrm{AZ}}
\newcommand{\propTrunc}[1]{\lVert #1 \rVert}
\newcommand{\bP}{\mathbb{P}}
\newcommand{\N}{\mathbb{N}}
\newcommand{\fib}{\mathrm{fib}}
\newcommand{\Aut}{\mathrm{Aut}}
\begin{document}

\title{Ch\^atelet's Theorem in Synthetic Algebraic Geometry}

\author{Thierry Coquand}
\affil{Computer Science and Engineering Department,
University of Gothenburg, Sweden,
\url{coquand@chalmers.se}
}
\author{Hugo Moeneclaey}
\affil{Computer Science and Engineering Department,
University of Gothenburg, Sweden,
\url{hugomo@chalmers.se}
}
\date{\today}
\maketitle

%\title{Ch\^atelet's Theorem in Synthetic Algebraic Geometry}

%\rednote{No abstract? Should the following be a long abstract, or should we add a short one and use the following as an intro? TODO}

\section*{Introduction}

{F}ran\c cois {C}h\^atelet introduced the notion of Severi-Brauer variety in his 1944 PhD thesis
\cite{chatelet44}. One motivation is to provide
a generalisation of the well-known result that a conic which has a rational point is isomorphic to $\bP^1(k)$.
He defines a Severi-Brauer variety to be a variety which becomes isomorphic to some $\bP^n(k)$ after
a separable extension. After recalling the characterisation of a central simple algebra over a field $k$, as
an algebra which becomes isomorphic to a matrix algebra $M_n(k)$ after a separable extension, he notices the fundamental
fact that, $\bP^{n}(k)$ and $M_{n+1}(k)$ have the same automorphism group $PGL_{n+1}(k)$. He uses then this
to describe a correspondence between Severi-Brauer varieties and central simple algebras, and as a corollary
obtains the following generalisation of Poincar\'e's result: a Severi-Brauer variety which has a rational point
is isomorphic to some $\bP^{n}(k)$. This result and its proof are described in Serre's book on local fields \cite{serre62}.
(The paper \cite{colliot88} and the book \cite{gille2017central} also contain a description of this result.)

The notion of central simple algebra over a field
has been generalised to the notion of Azumaya algebra  \cite{azumaya51}, and
Grothendieck \cite{grothendieck68} generalized the notion of Severi-Brauer over over an arbitrary comuutative ring.
The goal of this note is to present a formulation and proof of Ch\^atelet's Theorem over an arbitrary commutative ring
in the setting of synthetic algebraic geometry \cite{draft}, using the results already proved about projective
space \cite{sag-projective} in this context, in particular the fact that any automorphism of the projective space is given
by a homography. We also rely essentially on basic results about dependent type theory with univalence \cite{hott}
and modalities \cite{modalities}, in particular the fact that, in this context, \'etale sheafification can be described
as modalities. The formulation of Ch\^atelet's Theorem becomes that for $X$ a scheme, we have that:
\[\propTrunc{X=\bP^n}_{T} \to \propTrunc{X} \to \propTrunc{X=\bP^n}\]
where:
\[\propTrunc{X=\bP^n}_{T}\]
is the localisation of $\propTrunc{X=\bP^n}$ for a modality $T$ satisfying some basic properties (valid for the modality
corresponding to \'etale sheafification).

%\tableofcontents

\section{\'Etale sheaves}
\label{etale-sheaves}

\subsection{Affine schemes are \'etale sheaves}

%\begin{definition}
%Unramifiable polynomial
%\end{definition}

%\rednote{Should we recall the definition? I don't think so as it is never used. TODO}

Monic unramifiable polynomials are defined in \cite{wraith79} and analysed in \cite{coqazumaya}.
If $P$ is a proposition, we say that $A$ is $P$-local if, and only if, the canonical map $A\rightarrow A^P$ is an equivalence.
Given a family of propositions $P_i$, the types that are $P_i$-local for all $i$ form a model of type theory with univalence,
and we have an associated lex modality, the nullification modality \cite{modalities,Quirin16}.
Following \cite{wraith79}, we can consider the \'etale modality, which corresponds to the family of propositions
 $\propTrunc{\Spec(R[X]/g)}$, for $g$ monic unramifiable.

\begin{definition}
A type $X$ is called an \'etale sheaf if for all $g:R[X]$ monic unramifiable, we have that $X$ is $\propTrunc{\Spec(R[X]/g)}$-local.
\end{definition}

%This means that being an \'etale sheaf is a lex modality, as it is localisation at a family of propositions. 

\begin{remark}
  By \cite{wraith79} this should agree with the usual \'etale topology.
  It should also be noted that we will never use the unramifiability assumption, so we could just use non-constant monic polynomials instead.
\end{remark}

\begin{lemma}\label{etale-subcanonical}
The type $R$ is an \'etale sheaf.
\end{lemma}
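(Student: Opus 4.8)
The plan is to unfold the locality condition and reduce it, via the universal property of propositional truncation and synthetic quasi-coherence, to an elementary statement about polynomials. Fix a monic $g:R[X]$ of degree $n\ge 1$ and write $S:=\Spec(R[X]/g)$ and $P:=\propTrunc{S}$. By definition of the \'etale modality I must show that the constant map $R\to R^{P}$ is an equivalence. The first reduction is to rewrite $R^{P}$ as an equalizer: since $R$ is a set, a map $P\to R$ is the same thing as a weakly constant map $S\to R$, so $R^{P}$ is the equalizer of the two restriction maps $R^{S}\rightrightarrows R^{S\times S}$ induced by the two projections $S\times S\to S$. For a $0$-type target only this first level of the \v{C}ech nerve of $S\to \mathbf 1$ is needed, so no higher coherence has to be tracked.

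Next I would apply the synthetic quasi-coherence axiom of \cite{draft}. Products of affine schemes are affine, with $S\times S=\Spec(A\otimes_R A)$ for $A:=R[X]/g$, and quasi-coherence gives $R^{S}\simeq A$ and $R^{S\times S}\simeq A\otimes_R A$ as $R$-algebras, identifying the two restriction maps with $a\mapsto a\otimes 1$ and $a\mapsto 1\otimes a$. Thus $R^{P}$ is identified with the subtype $\{\,h\in A : h\otimes 1=1\otimes h\,\}$, and the constant map $R\to R^{P}$ becomes the structure map $R\to A$ corestricted to this equalizer.

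It then remains to verify the purely algebraic claim that this equalizer is exactly the image of $R$: the only $h\in R[X]/g$ with $h(X)=h(Y)$ in $A\otimes_R A=R[X,Y]/(g(X),g(Y))$ are the constants. Working modulo $g(Y)$ in $(R[Y]/g)[X]$ and using that $g$ is monic of degree $n\ge 1$, a leading-coefficient comparison in $X$ forces the coefficients $c_1,\dots,c_{n-1}$ of $h=\sum_{i<n}c_iX^i$ to vanish. Since $A$ is a free $R$-module with $1$ among its basis, the structure map $R\to A$ is a split injection, so it restricts to an equivalence $R\simeq R^{P}$; as $g$ was arbitrary, $R$ is an \'etale sheaf.

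The step I expect to carry the real weight is the passage through $S\times S$ together with quasi-coherence. The naive approach---calling a function on $S$ weakly constant iff it takes a single value at all $R$-points of $S$---is not available, since $g$ need not have any root in $R$ and $S$ may be empty as a bare type; argued pointwise the statement would simply be false. What rescues the argument is that quasi-coherence identifies ``vanishing as a function on the scheme $S\times S$'' with ``vanishing as an element of $A\otimes_R A$'', so weak constancy becomes the algebraic membership $h(X)-h(Y)\in(g(X),g(Y))$, which retains content even when $R$-points are absent. Once this identification is in place the degree argument is routine, and it uses only that $g$ is monic and non-constant---never its unramifiability---in agreement with the remark above.
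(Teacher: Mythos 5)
Your proof is correct and follows essentially the same route as the paper's: reduce $R^{\propTrunc{S}}$ to the equaliser of $R^S\rightrightarrows R^{S\times S}$ using that $R$ is a set, identify these with $R[X]/g\rightrightarrows R[X]/g\otimes R[X]/g$ via quasi-coherence, and conclude by an elementary computation in the monomial basis afforded by monicity. The only difference is that you spell out the coefficient comparison that the paper dismisses as ``clear'' after noting $R[X]/g\simeq R^n$.
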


\begin{proof}
Let $g:R[X]$ be monic and write $S=\Spec(R[X]/g)$. We have a coequaliser in sets:
\[S\times S\rightrightarrows S \to \propTrunc{S}\]
So since $R$ is a set we have an equaliser diagram:
\[R^{\propTrunc{S}} \to R^S\rightrightarrows R^{S\times S}\]
so that it is enough to prove that $R$ is the equaliser of:
\[R[X]/g \rightrightarrows R[X]/g \otimes R[X]/g\]
to conclude. But since $g$ is monic we merely have:
\[R[X]/g \simeq R^n\]
and it is clear that $R$ is the equaliser of:
\[R^n \rightrightarrows R^n\otimes R^n\]
\end{proof}

\begin{remark}\label{R-modal-subcanonical}
If $R$ is modal, then so is $\mathrm{Hom}(A,R)$ for any $R$-algebra $A$ by general reasoning on modalities, so that every affine scheme is modal. By duality this implies that every finitely presented algebra is modal.
\end{remark}

\subsection{Schemes are \'etale sheaves}

\begin{lemma}\label{scheme-are-sheaf-from-affine}
Assume given a proposition $P$ such that:
\begin{itemize}
\item The type $R$ is $P$-local.
\item Any open proposition is $P$-local.
\item The type of open propositions is $P$-local.
\end{itemize}
Then any scheme is $P$-local.
\end{lemma}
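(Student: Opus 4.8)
The plan is to unfold $P$-locality concretely. Since $P$-locality is the nullification modality at $P$, a type $X$ is $P$-local exactly when the constant map $X\to(P\to X)$, sending $x$ to $\lambda p.\,x$, is an equivalence. By \Cref{R-modal-subcanonical} together with the first hypothesis, every affine scheme is $P$-local. A scheme $X$ comes with a finite cover by affine opens $U_1,\dots,U_n:X\to\mathrm{Open}$, where $\mathrm{Open}$ denotes the type of open propositions; as ``$X$ is $P$-local'' is itself a proposition, I may assume such a cover given. It then remains to construct an inverse $r:(P\to X)\to X$ to the constant map and to check the two round-trips, both of which are equalities in sets and hence propositions.

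Given $g:P\to X$, I would first localise the cover along $g$. For each $i$ the assignment $\lambda p.\,U_i(g\,p)$ is a map $P\to\mathrm{Open}$, so by the third hypothesis it is constant, equal to some $W_i:\mathrm{Open}$ with $U_i(g\,p)=W_i$ for all $p$. Applying the covering condition $\forall x,\ \bigvee_i U_i(x)$ to $g\,p$ yields $P\to\bigvee_i W_i$; but $\bigvee_i W_i$ is again an open proposition, hence $P$-local by the second hypothesis, so from $P\to\bigvee_i W_i$ one recovers that $\bigvee_i W_i$ holds outright. Thus the $W_i$ form an open cover of the point. Under the assumption $W_i$ the map $g$ corestricts to a map $P\to U_i$ into the affine open $U_i$, which is $P$-local, so this corestriction is constant, determining a point $x_i:X$ lying in $U_i$; on a double overlap $W_i\wedge W_j$ the same reasoning applied to the affine $U_i\cap U_j$ forces $x_i=x_j$, so the family $(x_i)$ is compatible.

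I would then glue the compatible family $(x_i)$ over the cover $\bigvee_i W_i$, using the sheaf property of schemes for finite open covers available from the foundational development \cite{draft}, to obtain $x:=r(g):X$. The two round-trips are now discharged as propositions using the witnessed $\bigvee_i W_i$: for $c\circ r$ one checks $g=\lambda p.\,x$ by reducing, under a given $W_i$, to $g=\lambda p.\,x_i$; for $r\circ c$ one notes that for $g=\lambda p.\,x$ the localised cover is just $W_i=U_i(x)$, which holds for some $i$ directly by the covering condition, and the glued section restricts to $x$. I expect the gluing step to be the main obstacle, since it is where set-level descent for schemes is genuinely used and where one must treat the cover $\{W_i\}$ of the point, the compatibility on overlaps, and the resulting global section coherently. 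The delicate point is that the whole construction must never secretly assume $P$ to be inhabited: this is precisely what the second and third hypotheses guarantee, by letting the open propositions and the type $\mathrm{Open}$ ignore $P$.
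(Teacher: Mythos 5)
Your opening moves coincide with the paper's --- localise the cover via the third hypothesis to get constant opens $W_i$, use the second hypothesis to extract $\bigvee_i W_i$, and use $P$-locality of affines --- but you then commit to building an explicit two-sided inverse $r$, which forces you into the compatibility-and-gluing business, and two steps there do not go through as written. First, you appeal to ``the affine $U_i\cap U_j$'': in this setting schemes carry no separatedness hypothesis, so the intersection of two affine opens is only an open subtype of an affine, hence a scheme but not affine in general; and invoking its being a scheme to get $P$-locality would be circular at this point. The compatibility is still salvageable, but not that way: under $W_i\wedge W_j$ you have $g=\lambda p.\,x_i$ and $g=\lambda p.\,x_j$ in $X^P$, so for each $p:P$ the proposition $U_i(x_j)=U_i(g(p))\leftrightarrow W_i$ holds; thus $P\to U_i(x_j)$, and since $U_i(x_j)$ is open the second hypothesis gives $U_i(x_j)$ outright, so $x_i$ and $x_j$ are both points of the $P$-local affine $U_i$ with the same image in $U_i^P$, whence $x_i=x_j$. (Note you cannot conclude $x_i=x_j$ directly from $\lambda p.\,x_i=\lambda p.\,x_j$, since injectivity of $X\to X^P$ is part of what is being proved.) Second, the gluing step needs no scheme-specific descent: $X$ is a set and $i\mapsto x_i$ is weakly constant on $\sum_i W_i$, so it factors through the truncation $\bigvee_i W_i$, which you have shown to hold; that is all the ``sheaf property'' required.

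The paper sidesteps all of this by proving only the propositional statement that every $f:P\to X$ \emph{merely} factors through a point --- so the mere disjunct $V_k$ can be consumed directly, with no gluing and no compatibility check --- and then disposes of the embedding half of $P$-locality by observing that the identity types of a scheme are themselves schemes and, being propositions, are $P$-local by that same surjectivity argument. Your route can be completed with the repairs above, but it reconstructs by hand what the paper's factorisation into ``surjectivity for all schemes, then apply it to identity types'' renders unnecessary.
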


\begin{proof}
Since $R$ is $P$-local, all affine schemes are $P$-local as explained in \Cref{R-modal-subcanonical}.

We check that for all scheme $X$, any map:
\[f:P\to X\]
merely factors through $1$. Take $(U_i)_{i:I}$ a finite cover of $X$ by affine scheme. Then for any $i:I$ we have that $f^{-1}(U_i)$ is an in $P$, so since the type of open is $P$-local, we merely have an open proposition $V_i$ such that for all $x:P$, we have:
\[(x\in f^{-1}(U_i) )\leftrightarrow V_i\]
Since the $f^{-1}(U_i)$ cover $P$, we have that:
\[P\to \lor_{i:I} V_i\]
But open propositions are assumed to be $P$-local, so we have that:
\[ \lor_{i:I} V_i\]
Assume $k:I$ such that $V_k$ holds. Then $f^{-1}(U_k) = P$ and the map $f$ factors through the affine scheme $U_k$. Since affine schemes are $P$-local, we merely have a lift for $f$.

Now we conclude that any scheme is $P$-local by proving that its identity types are $P$-local. Indeed they are schemes, so the previous point implies they are $P$-local.
\end{proof}

%\rednote{Next lemma surely has a more direct, algebraic proof. TODO}

We will use freely the terminology and results of \cite{draft}; in particular a proposition is {\em open}
if, and only if, it is equivalent to a proposition of the form $r_1\neq 0\vee\dots\vee r_m\neq 0$ for some
$r_1,\dots,r_m$ in $R$.

\begin{lemma}\label{roots-monic-proper}
  If $g$ is a monic polynomial, and $h_1,\dots,h_m$ in $R[X]$, then the proposition
  $\forall_xg(x)=0\rightarrow h_1(x)\neq 0\vee\dots\vee h_m(x)\neq 0$ is open.
  It follows that, for any monic $g:R[X]$, and for any open $U$ in $\Spec(R[X]/g)$ the proposition:
  \[\prod_{x:\Spec(R[X]/g)}U(x)\]
is open.
\end{lemma}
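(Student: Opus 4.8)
The plan is to reduce the first statement to an openness condition on the minors of a matrix over $R$, exploiting that $g$ monic makes $\Spec(R[X]/g)$ a finite free scheme over $R$. Write $A := R[X]/g$, which is free of rank $n := \deg g$ over $R$, and recall that its points $x : \Spec(A)$ are exactly the roots of $g$. Reducing the $h_i$ modulo $g$ yields $\bar h_1,\dots,\bar h_m : A$, and the proposition to be shown open is
\[
\prod_{x:\Spec(A)}\bigl(h_1(x)\neq 0 \vee \dots \vee h_m(x)\neq 0\bigr).
\]
The first and main step is to identify this proposition with the purely algebraic statement that $\bar h_1,\dots,\bar h_m$ generate the unit ideal of $A$.

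For that equivalence, one direction is elementary: if $\sum_i a_i \bar h_i = 1$ in $A$, then evaluating at any root $x$ gives $\sum_i a_i(x)h_i(x)=1$, and since $R$ is a local ring one of the summands $a_i(x)h_i(x)$, hence the corresponding $h_i(x)$, is invertible and in particular nonzero. For the converse I would argue that the displayed proposition asserts precisely that the basic opens $\{x : h_i(x)\neq 0\}$ cover $\Spec(A)$, equivalently that the closed complement $\Spec\bigl(A/(\bar h_1,\dots,\bar h_m)\bigr)$ has no points; since $A/(\bar h_1,\dots,\bar h_m)$ is finitely presented, the synthetic Nullstellensatz of \cite{draft} forces it to be the zero ring, so that $(\bar h_1,\dots,\bar h_m)=A$.

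It then remains to see that this unit-ideal condition is open. It is equivalent to surjectivity of the $R$-linear map $A^m \to A$ sending $(a_i)$ to $\sum_i a_i\bar h_i$; choosing the basis $1,X,\dots,X^{n-1}$ of $A$ presents this map as an $n\times nm$ matrix $M$ over $R$. A map between finite free modules is surjective exactly when the ideal generated by its maximal ($n\times n$) minors is the unit ideal, and over the local ring $R$ a finitely generated ideal $(\delta_1,\dots,\delta_k)$ equals the unit ideal iff some $\delta_j$ is invertible, i.e. iff $\delta_1\neq 0\vee\dots\vee\delta_k\neq 0$. This last proposition is open by definition, which proves the first statement. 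For the concluding claim, every open $U$ of the affine scheme $\Spec(R[X]/g)$ is a finite union of basic opens (quasi-compactness of affine schemes, \cite{draft}), so there are $h_1,\dots,h_m:R[X]$ with $U(x)\leftrightarrow\bigl(h_1(x)\neq 0\vee\dots\vee h_m(x)\neq 0\bigr)$; then $\prod_{x:\Spec(R[X]/g)}U(x)$ is exactly the proposition already shown to be open.

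I expect the main obstacle to be the converse direction above: turning the geometric hypothesis that the $h_i$ never vanish simultaneously on the roots of $g$ into the algebraic fact $(\bar h_1,\dots,\bar h_m)=A$. This is where the finiteness of $g$ is essential and where one must invoke the synthetic Nullstellensatz rather than argue by hand; the remaining steps (surjectivity versus maximal minors, and unit ideal versus a disjunction of non-vanishing over the local ring $R$) are standard constructive linear algebra.
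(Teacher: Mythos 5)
Your argument is correct, but it takes a genuinely different route from the paper, whose entire proof is a citation of the general elimination lemma IV-10-2 in Lombardi--Quitt\'e: that lemma produces, from the monic $g$ and the $h_i$, an explicit finitely generated \emph{resultant ideal} of $R$ whose vanishing locus is the image in $\Spec(R)$ of the common zero set of the $h_i$ on $\Spec(R[X]/g)$, so that the desired proposition becomes ``some generator of that ideal is nonzero'', which is open. You instead internalize everything in the synthetic setting: you use duality/the synthetic Nullstellensatz ($\Spec(B)=\emptyset$ forces the finitely presented algebra $B$ to be trivial) to translate ``the $h_i$ have no common root among the roots of $g$'' into $(\bar h_1,\dots,\bar h_m)=R[X]/g$, then reformulate this as surjectivity of an $R$-linear map between finite free modules, and finally use the standard constructive fact that such surjectivity is detected by the maximal minors generating the unit ideal, which over the local ring $R$ is a finite disjunction of nonvanishing conditions. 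Each step is sound (the equivalence with the unit-ideal condition, the minors criterion, and the reduction of the second claim to the first via the characterization of opens of an affine scheme as finite unions of basic opens are all available in the cited framework). What the paper's citation buys is brevity and an explicit elimination ideal computed uniformly from $g$ and the $h_i$ without invoking duality; what your argument buys is a self-contained proof inside synthetic algebraic geometry that makes visible exactly where locality of $R$ and the Nullstellensatz enter.
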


\begin{proof}
  This follows from  IV-10-2 in \cite{lombardi-quitte}.
\end{proof}

\begin{proposition}\label{scheme-is-etale-sheaf}
Any scheme is an \'etale sheaf.
\end{proposition}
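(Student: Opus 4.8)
The plan is to apply \Cref{scheme-are-sheaf-from-affine}. Fix a monic (unramifiable) polynomial $g:R[X]$ and write $S=\Spec(R[X]/g)$. By definition a type is an \'etale sheaf precisely when it is $\propTrunc{S}$-local for every such $g$, so it suffices to verify, for the single proposition $P=\propTrunc{S}$, the three hypotheses of \Cref{scheme-are-sheaf-from-affine} and then let $g$ vary. The first hypothesis, that $R$ is $P$-local, is exactly \Cref{etale-subcanonical}. It then remains to prove that every open proposition is $P$-local and that the type of open propositions is $P$-local, and both of these will come from \Cref{roots-monic-proper}.

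For the second hypothesis, let $U$ be an open proposition. Since $U$ is a proposition, being $P$-local amounts to the implication $(\propTrunc{S}\to U)\to U$, the reverse implication being the constant map. By the universal property of truncation $(\propTrunc{S}\to U)$ is the same as $\prod_{x:S}U$, the family being constant in $x$. By \Cref{roots-monic-proper} this product is itself an open proposition, so we are reduced to comparing two opens. I would close the gap exactly as in the proof of \Cref{etale-subcanonical}: the algebra $R[X]/g\simeq R^n$ is faithfully flat over $R$, and the associated descent, already exploited there through the equaliser of $R[X]/g\rightrightarrows R[X]/g\otimes R[X]/g$, identifies $\prod_{x:S}U$ with $U$ and yields the missing implication.

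For the third hypothesis I would start from the coequaliser of sets $S\times S\rightrightarrows S\to\propTrunc{S}$ used already in \Cref{etale-subcanonical}, which gives the equaliser
\[\mathrm{Open}^{\propTrunc{S}}\to \mathrm{Open}^{S}\rightrightarrows \mathrm{Open}^{S\times S}.\]
Thus $(\propTrunc{S}\to\mathrm{Open})$ is the type of descent data: families of opens on $S$ that agree on $S\times S$. The task is to show that the constant map $\mathrm{Open}\to(\propTrunc{S}\to\mathrm{Open})$ is an equivalence, i.e. that every such datum descends to a unique open on the point. Here \Cref{roots-monic-proper} is what keeps us inside the opens: given a family $U:S\to\mathrm{Open}$, the descended proposition built by quantifying over $S$ is guaranteed to be open, and one checks it is inverse to the constant map, using faithfully flat descent as above to verify the round-trip.

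I expect this third hypothesis to be the main obstacle. Unlike the first two, it is a set-level rather than a merely propositional descent statement, so it is not enough to produce a logically equivalent proposition: one must exhibit a genuine open proposition and check that descent is an equivalence that round-trips correctly. This is precisely the point at which the non-formal input of \Cref{roots-monic-proper}—that quantifying an open over the finite scheme $S$ remains open—is indispensable, and matching it against the equaliser description of $\propTrunc{S}\to\mathrm{Open}$ is the delicate step.
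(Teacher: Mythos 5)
Your overall strategy is the paper's: apply \Cref{scheme-are-sheaf-from-affine} to $P=\propTrunc{S}$ for each monic $g$, obtain the first hypothesis from \Cref{etale-subcanonical}, and handle the type of opens by quantifying a constant family over $S$ and invoking \Cref{roots-monic-proper}; your treatment of the third hypothesis is essentially the paper's argument in equaliser clothing.

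The gap is in the second hypothesis. For a bare proposition $U$ the equaliser/descent formalism of \Cref{etale-subcanonical} carries no content: the two parallel maps $U^S\rightrightarrows U^{S\times S}$ already agree, and the whole descent statement collapses to the single implication $\bigl(\prod_{x:S}U\bigr)\to U$, which is precisely what you are trying to prove --- so asserting that descent ``identifies $\prod_{x:S}U$ with $U$'' merely restates the goal. Knowing that $\prod_{x:S}U$ is open (via \Cref{roots-monic-proper}) does not close this either. What is genuinely needed is a non-emptiness input on $S$: if $S$ could be empty the implication fails for $U=\bot$ (the empty disjunction is open). The paper's one-line argument is that $\propTrunc{S}\to U$ gives $\neg\neg S\to\neg\neg U$; since $g$ is monic non-constant, $R[X]/g\simeq R^n$ with $n\geq 1$ is a nontrivial algebra, so $\neg\neg S$ holds; and open propositions are $\neg\neg$-stable, so $\neg\neg U\to U$. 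Your appeal to faithful flatness gestures at the same non-triviality of $R[X]/g$, but you never extract from it an actual implication landing in $U$. Once this step is supplied, the rest goes through; note also that, contrary to your closing assessment, with the second hypothesis in hand the third is the easy one, exactly as in the paper.
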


\begin{proof}
Assume given $g:R[X]$ monic, we can apply \Cref{scheme-are-sheaf-from-affine} because:
\begin{itemize}
\item The type $R$ is an \'etale sheaf by \Cref{etale-subcanonical}.
\item Any open proposition $U$ is an \'etale sheaf because if:
\[\propTrunc{\Spec(R[X]/g)}\to U\]
then since $\neg\neg\Spec(R[X]/g)$ we have $\neg\neg U$, which implies $U$.
\item Since open propositions are \'etale sheaves, it is enough that any map:
\[\propTrunc{\Spec(R[X]/g)}\to \mathrm{Open}\]
merely factors through $1$. But given a constant open $U$ in $\Spec(R[X]/g)$, for any $x:\Spec(R[X]/g)$ we have that:
\[x\in U \leftrightarrow \prod_{y:\Spec(R[X]/g)} y\in U)\]
The right hand side is open by \Cref{roots-monic-proper}, giving the required lift. 
\end{itemize}
\end{proof}

\subsection{Descent for finite free modules}

%\rednote{Maybe give a clearer proof, for finite free only? TODO}

\begin{lemma}\label{fp-equivalent-pointwise}
  If we have $M_x$ a finitely presented (resp. finite projective)
  $R$-module depending on $x:\Spec(A)$, then $\prod_{x:\Spec(A)}M_x$ is a finitely presented (resp. finite projective) $A$-module.
\end{lemma}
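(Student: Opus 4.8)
The plan is to reduce everything to the fundamental duality $A \simeq \prod_{x:\Spec(A)} R$ of \cite{draft}, which exhibits $A$ as the ring of $R$-valued functions on $\Spec(A)$. I would first record the $A$-module structure on $\prod_{x:\Spec(A)} M_x$: an element $a:A$ is a function $\Spec(A)\to R$, acting on a section $s$ by $(a\cdot s)_x = a(x)\cdot s_x$. The base case is the uniformly finite free one: if there is a fixed $n:\N$ with $M_x \simeq R^n$ naturally in $x$, then $\prod_{x:\Spec(A)} M_x \simeq \left(\prod_{x:\Spec(A)} R\right)^n \simeq A^n$, since $\prod_{x:\Spec(A)}$ commutes with the finite product $R^n$ and by the duality, so that $\prod_{x:\Spec(A)} M_x$ is finite free over $A$.

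The next step is to reduce the general case to this base case. Pointwise each $M_x$ merely admits a presentation $R^{m}\xrightarrow{\phi_x} R^{n}\to M_x\to 0$; in the finite projective case, since $R$ is local, this improves to a mere isomorphism $M_x\simeq R^{n}$. These data are propositionally truncated and the integers $m,n$ may vary with $x$, so to make them into honest families I would appeal to Zariski-local choice \cite{draft} together with quasi-compactness of $\Spec(A)$. This yields a finite cover of $\Spec(A)$ by basic opens $D(f_i) = \Spec(A[1/f_i])$ on which the presentation is uniform: over each $A_i := A[1/f_i]$ there is an honest matrix $\Phi_i$ (resp. an honest trivialisation of some constant rank) restricting at every $x:D(f_i)$ to $\phi_x$ (resp. to the chosen isomorphism). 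Setting $N_i := \mathrm{coker}(\Phi_i)$, a finitely presented $A_i$-module (resp. $N_i \simeq A_i^{n}$), right exactness of base change gives $N_i\otimes_{A_i} R \simeq M_x$ at each point $x$.

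It then remains to identify $\prod_{x:D(f_i)} M_x$ with $N_i$ and to glue. The identification is the reconstruction statement $N_i \simeq \prod_{x:D(f_i)}(N_i\otimes_{A_i} R)$ for the finitely presented module $N_i$: applying $\prod_{x:D(f_i)}$ to the defining right-exact sequence of $N_i$ and evaluating $\prod_x R^{m}\simeq A_i^{m}$ and $\prod_x R^{n}\simeq A_i^{n}$ by the duality reduces it to the free case. Finally, being finitely presented and being finite projective are Zariski-local properties of an $A$-module, so the local conclusions over the $D(f_i)$ assemble into the desired global statement over $A$.

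The hard part will be the exactness used in the reconstruction step. Since $\prod_{x}$ is a limit it is automatically left exact, but concluding that $A_i^{n} = \prod_x R^{n} \to \prod_x M_x$ is \emph{surjective} requires that taking sections over the affine $\Spec(A_i)$ preserve the pointwise surjections $R^{n}\twoheadrightarrow M_x$. This is precisely a vanishing-of-obstruction phenomenon for the affine base (the fibre of $\phi_x$ over a prescribed value is an inhabited, but only propositionally inhabited, affine scheme), and I expect to discharge it once more through Zariski-local choice: one lifts a global section of $\prod_x M_x$ to $\prod_x R^{n}$ after refining the cover, the affineness of $\Spec(A_i)$ (an \'etale sheaf by \Cref{scheme-is-etale-sheaf}) ensuring that the local lifts can be globalised.
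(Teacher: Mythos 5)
The paper does not actually prove this lemma; it quotes it as Theorem~7.2.3 of \cite{draft}, so there is no in-paper argument to measure your proposal against. That said, your sketch reconstructs the architecture of the proof given in that reference essentially correctly: Zariski-local choice plus boundedness to obtain uniform presentations (resp.\ trivialisations, using that finite projective modules over the local ring $R$ are free) over a finite cover by basic opens $D(f_i)$, the duality $\prod_{x:\Spec(A)}R\simeq A$ to dispose of the free case, identification of $\prod_x M_x$ with the cokernel of the assembled matrix $\Phi_i$, and patching over the cover. So the skeleton is the right one.

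The genuine gap is in the step you yourself flag as hard, and the tool you propose there does not close it. Zariski-local choice gives you lifts $t_j\in\prod_{x:D(g_j)}R^{n}=(A_i^{n})[1/g_j]$ of a section $s$ of $\prod_x M_x$, but these do not agree on overlaps, so "globalising" them means taking a partition-of-unity combination $\sum_j h_j g_j^{N}=1$ and showing that $\sum_j h_j u_j$ (with $u_j\in A_i^{n}$ clearing denominators) still maps to $s$. For that you must know that the error term $\bar u_j-g_j^{N}\cdot s$, which you only know vanishes at points of $D(g_j)$, is annihilated by a further power of $g_j$ in $\prod_x M_x$; equivalently, that restriction to a basic open is localisation, $\prod_{x:D(g)}M_x\simeq\bigl(\prod_x M_x\bigr)[1/g]$. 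This is a module-level instance of synthetic quasi-coherence ($M[1/f]\simeq M^{D(f)}$ for $M$ finitely presented --- compare \Cref{descent-sqc-etale}, which is the \'etale analogue), not a consequence of another round of local choice or of affineness alone; it is exactly the "weak quasi-coherence" input that the proof in \cite{draft} isolates and proves first. The same missing ingredient is hidden in your final appeal to "being finitely presented is Zariski-local": to invoke the local-global principle you must already know that the f.p.\ $A_i$-module you built over $D(f_i)$ is the localisation at $f_i$ of the global product. So: right decomposition, but the exactness/gluing step needs the quasi-coherence lemma stated and proved rather than deferred to local choice.
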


\begin{proof}
See Theorem 7.2.3 in \cite{draft}.
\end{proof}

\begin{lemma}\label{descent-sqc-etale}
Let $M$ be a module that is an \'etale sheaf such that we have the \'etale sheafification of "$M$ is f.p.", then for any monic $g$ we have that:
\[R[X]/g\otimes M \simeq M^{\Spec(R[X]/g)}\]
\end{lemma}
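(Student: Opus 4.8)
The plan is to produce a natural comparison map and prove it is an equivalence by descending, through the étale modality, to the case where $M$ is genuinely finitely presented. Write $A=R[X]/g$ and $S=\Spec(R[X]/g)$. Since $g$ is monic of some degree $n$, we have $A\simeq R^n$ as an $R$-module, and $M^S=\prod_{x:S}M$ carries an $A$-module structure by $(a\cdot f)(x)=x(a)\cdot f(x)$. There is a natural $A$-linear evaluation map
\[\varphi:A\otimes M\to M^{S},\qquad \varphi(a\otimes m)(x)=x(a)\cdot m,\]
and the goal is to show $\varphi$ is an equivalence. I would first record that both sides are étale sheaves: $M^S$ is a $\prod$ of copies of the sheaf $M$, hence modal, and $A\otimes M\simeq M^{n}$ is a finite product of copies of $M$, hence also modal.

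Next I would carry out the reduction. Since the étale modality $T$ is lex and both $A\otimes M$ and $M^S$ are modal, the proposition ``$\varphi$ is an equivalence'' is itself modal: it is built as a $\prod$ of $\mathrm{isContr}$ of fibres of a map between modal types, and lexness keeps fibres, path types and contractibility modal. Hence it suffices to prove its $T$-image. Now the finitely presented case (treated below) provides a map from ``$M$ is finitely presented'' to ``$\varphi$ is an equivalence''; applying $T$ and feeding in the hypothesis, namely the given term of $T(\text{``}M\text{ is finitely presented''})$, yields $T(\text{``}\varphi\text{ is an equivalence''})$, and therefore ``$\varphi$ is an equivalence'' since this proposition is modal. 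So it remains to treat the case $M$ finitely presented.

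For $M$ finitely presented, I would choose a presentation $R^p\xrightarrow{\alpha}R^q\xrightarrow{\pi}M\to 0$ and compare the right-exact sequence $A\otimes R^p\to A\otimes R^q\to A\otimes M\to 0$ with the sequence $(R^p)^S\to(R^q)^S\to M^S$ obtained by applying $(-)^S$ to the presentation, using naturality of $\varphi$. On finite free modules $\varphi$ is an equivalence: $(R^k)^S\simeq (R^S)^k\simeq A^k\simeq A\otimes R^k$, where $R^S\simeq A$ is synthetic quasi-coherence. Thus the two left-hand vertical maps are equivalences, and $\varphi$ for $M$ follows once the lower sequence is also right exact, i.e.\ once $(-)^S$ preserves this cokernel.

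The main obstacle is exactly this right-exactness of $(-)^S$ on finitely presented modules: concretely, that $\pi^S:(R^q)^S\to M^S$ is surjective and has image equal to that of $\alpha^S$. This is the synthetic counterpart of the vanishing of higher quasi-coherent cohomology over an affine scheme, and it is the SQC content signalled by the name of the lemma. Here \Cref{fp-equivalent-pointwise}, applied to the constant family $M_x:=M$, does the bookkeeping: it certifies that $M^S=\prod_{x:S}M$ is a finitely presented $A$-module, computed as the base change of $M$ along $S\to 1$, which is precisely $A\otimes M$ and compatibly with $\varphi$. I expect the clean way to finish is to read off this identification, rather than to reprove surjectivity by hand.
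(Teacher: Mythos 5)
Your proposal is correct and follows essentially the same route as the paper: observe that both $R[X]/g\otimes M\simeq M^n$ and $M^{\Spec(R[X]/g)}$ are \'etale sheaves, so the proposition that the comparison map is an equivalence is modal and one may assume $M$ finitely presented, at which point the statement is exactly Theorem 7.2.3 of \cite{draft}. The extra sketch of the presentation/right-exactness argument is just an unpacking of that cited theorem, which the paper (like you, in the end) simply invokes.
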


\begin{proof}
We have that $R[X]/g\otimes M$ is merely equal to $M^n$ where $n$ is the degree of $g$, therefore it is an \'etale sheaf. As $M^{\Spec(R[X]/g)}$ is an \'etale sheaf as well, so when proving that:
\[R[X]/g\otimes M \to M^{\Spec(R[X]/g)}\]
is an equivalence, we can assume that $M$ is finitely presented. In this case we conclude by Theorem 7.2.3 in \cite{draft}.
\end{proof}

\begin{lemma}\label{fp-stable-etale-tensor}
  Let $A$ be an fppf algebra and let $M$ be an $R$-module. Then if $A\otimes M$ is f.p. (resp. finite projective)
  as an $A$-module if and only if $M$ is f.p. (resp. finite projective) as an $R$-module.
\end{lemma}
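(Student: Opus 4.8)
The plan is to prove the easy implication by base change and the hard one by faithfully flat descent, treating finite presentation and finite projectivity in parallel.

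For the ``only if'' direction I would argue directly. If $M$ is finitely presented, pick a presentation $R^m \to R^n \to M \to 0$; since $A \otimes -$ is right exact this yields $A^m \to A^n \to A \otimes M \to 0$, a finite presentation of $A \otimes M$ over $A$. If $M$ is finite projective, a splitting $M \oplus N \cong R^k$ base-changes to $(A \otimes M) \oplus (A \otimes N) \cong A^k$, so $A \otimes M$ is a direct summand of a finite free $A$-module.

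The substance is the converse, where I would use that $A$ is faithfully flat to descend the two properties. First I would descend finite generation: writing $M$ as the filtered union of its finitely generated submodules $M'$ and using that tensoring commutes with this colimit (with injective transition maps, by flatness), the fact that $A \otimes M$ is finitely generated over $A$ forces the identity to factor through some $A \otimes M' \to A \otimes M$; faithful flatness turns this back into a surjection $M' \to M$, so $M$ is finitely generated. Next I would descend finite presentation: choosing a surjection $R^n \to M$ with kernel $K$, flatness gives a short exact sequence
\[0 \to A \otimes K \to A^n \to A \otimes M \to 0,\]
and since $A \otimes M$ is finitely presented and $A^n$ is finite free, $A \otimes K$ is finitely generated over $A$; applying the finite-generation descent just obtained to $K$ shows $K$ is finitely generated over $R$, so $M$ is finitely presented. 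For the finite projective case I would additionally use that a finitely presented module is finite projective exactly when it is flat, together with faithfully flat descent of flatness (namely $M$ is flat iff $A \otimes M$ is), reducing finite projectivity of $M$ to the finite presentation already descended plus flatness.

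The main obstacle is the descent of finite generation. In the synthetic setting the filtered-colimit manipulation and the exactness of flat base change on it require care, as does the key reflection principle that a map becoming surjective after the faithfully flat base change $- \otimes A$ was already surjective; this is exactly where faithful flatness, rather than mere flatness, of the fppf algebra $A$ is used. An alternative would be to replace the colimit argument by the pointwise description of \Cref{fp-equivalent-pointwise}, presenting $A \otimes M$ through sections over $\Spec(A)$, but even then the reflection of surjectivity along the cover $\Spec(A) \to 1$ remains the crux of the argument.
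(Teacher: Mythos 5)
The paper does not prove this lemma internally: it simply cites VIII.6.7 of \cite{lombardi-quitte}, and your argument is essentially the standard faithfully flat descent proof that this reference carries out constructively --- base change for the easy direction; descent of finite generation by writing $M$ as the filtered union of its finitely generated submodules and reflecting surjectivity along the faithfully flat $-\otimes A$; descent of finite presentation via the kernel of a chosen surjection $R^n\to M$; and finite projectivity from finite presentation together with descended flatness. This is correct and matches the intended content of the citation; the only caveat is that each ingredient (reflection of surjectivity and of flatness along a faithfully flat algebra, and ``finitely presented and flat implies finite projective'') must be used in its constructive form, which is precisely what Lombardi--Quitt\'e provide.
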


\begin{proof}
See VIII.6.7 in \cite{lombardi-quitte}.
\end{proof}

\begin{proposition}\label{descent-finite-free}
For $M$ a module that is an \'etale sheaf, the proposition "$M$ is an finite free" is itself an \'etale sheaf.
\end{proposition}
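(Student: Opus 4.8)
The plan is to unfold what it means for the proposition $Q :=$ ``$M$ is finite free'' to be an \'etale sheaf. Since $Q$ is a proposition, being an \'etale sheaf amounts, for each non-constant monic $g:R[X]$, to the single implication $(\propTrunc{S}\to Q)\to Q$, where I write $A:=R[X]/g$ and $S:=\Spec(A)$. So I would fix such a $g$, assume a map $h:\propTrunc{S}\to Q$, and aim to prove $Q$ outright. The key preliminary observation is that $A$ is a free $R$-module of rank $\deg g\geq 1$, hence a faithfully flat finitely presented (fppf) $R$-algebra; this is exactly what will let me invoke \Cref{fp-stable-etale-tensor} at the end.

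First I would manufacture the hypothesis needed to compute $A\otimes M$. Since being finite free implies being finitely presented, $h$ yields a map $\propTrunc{S}\to(M\text{ f.p.})$. Applying the \'etale sheafification and using that $\propTrunc{S}$ is one of the nullified propositions of the \'etale modality — so that its sheafification is contractible — this map witnesses that the \'etale sheafification of ``$M$ is f.p.'' holds. With $M$ an \'etale sheaf by hypothesis, \Cref{descent-sqc-etale} then provides an ($A$-linear) equivalence $A\otimes M\simeq M^{S}$.

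Next I would descend finite projectivity rather than freeness. For every $x:S$ the term $h(\lvert x\rvert):Q$ exhibits $M$ as finite free, hence finite projective; reading this as a \emph{constant} family of finite projective $R$-modules over $\Spec(A)$ and applying \Cref{fp-equivalent-pointwise}, the module $M^{S}=\prod_{x:S}M$ is finite projective over $A$. Transporting along the equivalence of the previous step, $A\otimes M$ is finite projective over $A$, so by \Cref{fp-stable-etale-tensor} together with the fppf-ness of $A$ the module $M$ is finite projective over $R$. Finally, since $R$ is a local ring in the synthetic setting, every finitely generated projective $R$-module is free, so $M$ is finite free and $Q$ holds.

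The main obstacle is conceptual rather than computational: finite freeness is \emph{not} stable under fppf base change, so one cannot hope to descend it directly. The whole point of the argument is to descend only the genuinely local property of finite projectivity — which is precisely what \Cref{fp-equivalent-pointwise} and \Cref{fp-stable-etale-tensor} are tailored for — and to recover freeness only at the very end, from locality of $R$. The two places I expect to require care are checking that the equivalence $A\otimes M\simeq M^{S}$ supplied by \Cref{descent-sqc-etale} is compatible with the $A$-module structures, and confirming that a constant family is admissible input to \Cref{fp-equivalent-pointwise}.
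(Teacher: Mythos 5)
Your proof is correct and follows essentially the same route as the paper, whose own proof is just the citation of \Cref{descent-sqc-etale} and \Cref{fp-stable-etale-tensor}; you have filled in the intended intermediate steps, namely \Cref{fp-equivalent-pointwise} to see that $M^{\Spec(R[X]/g)}$ is finite projective over $R[X]/g$, and locality of $R$ (the paper's \Cref{finite-projective-free}) to pass from finite projective back to finite free at the end. Your observation that one should descend finite projectivity rather than freeness is exactly the role the ``resp.\ finite projective'' clauses of those lemmas are meant to play.
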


\begin{proof}
  Follows from \Cref{descent-sqc-etale,fp-stable-etale-tensor}.
%Since $R$ is local, $M$ being finite free is equivalent to $M$ being finitely presented and finitely copresented \cite{TODO-diffgeo}. But both are \'etale sheaves by \Cref{descent-fp-fcop} so we can conclude.
\end{proof}

\section{Azumaya algebras and their associated Severi-Brauer variety}
From now on we assume a lex modality $T$ such that:
\begin{itemize}
\item Schemes are modal.
\item If $M$ is a $T$-modal $R$-module, then the proposition of $M$ being finite free is modal.
\end{itemize}
We call $T$-modal types sheaves and we write $\propTrunc{X}_T$ the sheafification of the propositional truncation of $X$. Note that $T$-modal types form a model of homotopy type theory \cite{modalities,Quirin16}.

In \Cref{etale-sheaves} we constructed such a modality (\Cref{scheme-is-etale-sheaf} and \Cref{descent-finite-free}).

We fix a natural number $n$ throughout.

\subsection{The type $\AZ_n$ of Azumaya algebras}

\begin{definition}
An Azumaya algebra of rank $n$ is a (non-commutative, unital) $R$-algebra $A$ such that its underlying type is a sheaf and:
\[\propTrunc{A=M_{n+1}(R)}_T\]
\end{definition}

We write $\AZ_n$ for the type of Azumaya algebra of rank $n$.

\begin{remark}\label{azumaya-independent-modality}
In \cite{coqazumaya}, we give a constructive proof that a $R$-algebra $A$ is an Azumaya algebra of rank $n$
if, and only if, $A$ is free as a $R$-module of rank $(n+1)^2$ and the canonical map
$A\otimes A^{op}\rightarrow \mathrm{End}_R(A)$ is an isomorphism.
\end{remark}

\begin{lemma}\label{azumayas-are-finite-free}
For all $A:\AZ_n$ we have that $A$ is finite free as a module.
\end{lemma}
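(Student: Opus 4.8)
The plan is to reduce \emph{finite freeness} of $A$ to that of $M_{n+1}(R)$ and then transport this property across the sheafified truncation $\propTrunc{A=M_{n+1}(R)}_T$ by the universal property of the modality $T$. The crucial preliminary observation is that being finite free is a \emph{proposition}: it asserts the mere existence of a natural number $k$ together with a module isomorphism $A\simeq R^k$. Hence, to conclude it suffices to produce an inhabitant of this proposition, and along the way we may freely exploit $T$-modality.

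First I would record that $M_{n+1}(R)$ is finite free, being isomorphic as an $R$-module to $R^{(n+1)^2}$. Since an equality of $R$-algebras $A=M_{n+1}(R)$ in particular induces an equality, hence an isomorphism, of the underlying $R$-modules, any such equality witnesses that $A$ is finite free. As ``$A$ is finite free'' is a proposition, this factors through the truncation and yields a map
\[ \propTrunc{A=M_{n+1}(R)} \to (A \text{ is finite free}). \]

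Next I would invoke the assumptions on $T$. By the definition of $\AZ_n$, the underlying type of $A$ is a sheaf, i.e.\ $T$-modal; the second axiom imposed on $T$ then guarantees that the proposition ``$A$ is finite free'' is modal. Since $\propTrunc{A=M_{n+1}(R)}_T$ is by construction the $T$-modalisation of $\propTrunc{A=M_{n+1}(R)}$, the universal property of $T$ extends the previous map to
\[ \propTrunc{A=M_{n+1}(R)}_T \to (A \text{ is finite free}). \]
Finally, the definition of an Azumaya algebra provides an inhabitant of $\propTrunc{A=M_{n+1}(R)}_T$, and applying the extended map to it gives that $A$ is finite free.

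I expect the only point requiring genuine care to be the verification that ``$A$ is finite free'' is modal, which is precisely where the second axiom on $T$ is used; it is essential here that $A$ itself be modal for that axiom to apply, a fact built into the definition of $\AZ_n$. Everything else is a formal manipulation of propositional truncation and the universal property of the modality.
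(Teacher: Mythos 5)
Your proposal is correct and is exactly the paper's argument, just spelled out in more detail: the paper's one-line proof likewise uses that $A$ being finite free is a modal proposition (by the second axiom on $T$, applicable since $A$ is a sheaf) to extend the evident map out of $\propTrunc{A=M_{n+1}(R)}$ through the $T$-sheafification.
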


\begin{proof}
By hypothesis $A$ being finite free is modal so that $\propTrunc{A=M_{n+1}(R)}_T$ implies $A$ finite free.
\end{proof}

\begin{definition}
Let $V$ be a free $R$-module, we define $\Gr_k(V)$ the $k$-Grassmannian of $V$ as the type of $k$-dimensional subspaces of $V$.
\end{definition}

\begin{lemma}\label{grassmanians-are-schemes}
Let $V$ be a finite free module, then $\Gr_k(V)$ is a scheme.
\end{lemma}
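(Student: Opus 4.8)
The plan is to exhibit the standard affine chart cover of the Grassmannian and verify that each chart is an open affine subscheme. Since being a scheme is a proposition and $V$ is finite free, I may transport along a merely available isomorphism and assume $V=R^n$ with $n$ the rank and standard basis $e_1,\dots,e_n$; note also that $\Gr_k(V)$ is a set, since being a subspace is proposition-valued, so it remains only to produce the cover. For each $k$-element subset $S\subseteq\{1,\dots,n\}$, write $V=R^S\oplus R^{S^c}$ for the splitting into coordinate subspaces, let $\pi_S:V\to R^S$ be the projection, and define
\[U_S:=\{\,W:\Gr_k(V)\mid \pi_S|_W:W\to R^S \text{ is an isomorphism}\,\},\]
the locus of subspaces complementary to the coordinate subspace $R^{S^c}$. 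There are $\binom{n}{k}$ such charts, a finite set.

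First I would check that each $U_S$ is an affine scheme. A complement $W$ of $R^{S^c}$ is exactly the graph of a unique linear map $M:R^S\to R^{S^c}$, namely $W=\{x+Mx:x:R^S\}$ with $M=\pi_{S^c}\circ(\pi_S|_W)^{-1}$. This sets up an equivalence between $U_S$ and the type of $(n-k)\times k$ matrices, i.e.\ $U_S\simeq R^{k(n-k)}=\Spec\big(R[x_{ij}]\big)$, which is affine. For openness, I fix $W$: the module $W$ is merely free of rank $k$ since $R$ is local, and in any such basis $\pi_S|_W$ is represented by a $k\times k$ matrix whose determinant is well defined up to a unit; hence ``$\pi_S|_W$ is an isomorphism'' is equivalent to invertibility of this determinant, which is an open proposition, so $U_S$ is an open subtype of $\Gr_k(V)$.

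The main work, and the main obstacle, is the covering: I must show $\bigvee_S (W\in U_S)$ for every $W:\Gr_k(V)$. Choosing (merely) a basis of $W$ gives an $n\times k$ matrix $B$ presenting the inclusion $W\hookrightarrow R^n$, and $W\in U_S$ is precisely the invertibility of the maximal minor $\det B_S$ (the rows indexed by $S$). Because $W$ is a rank-$k$ direct summand, these maximal minors generate the unit ideal; since $R$ is local, at least one $\det B_S$ is then a unit, which yields the required disjunction. All of this data is only merely available, but that is harmless: the statement to be proved is a proposition, so the chosen basis and the extracted invertible minor can be discharged.

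Establishing that the maximal minors of the inclusion of a rank-$k$ direct summand generate $(1)$ is the one genuinely commutative-algebraic input, and I expect it to be the delicate point; granting it, the finitely many $U_S$ form an open affine cover of $\Gr_k(V)$, so $\Gr_k(V)$ is a scheme.
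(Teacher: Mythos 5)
Your proof is correct and follows essentially the same route as the paper: both cover $\Gr_k(R^n)$ by the $\binom{n}{k}$ opens where a chosen set of $k$ coordinates is "dominant" (non-vanishing of the corresponding maximal minor, an open proposition invariant under change of basis), identify each chart with $R^{k(n-k)}$, and get the covering property from the fact that over the local ring $R$ some maximal minor of a rank-$k$ summand must be invertible. The only cosmetic difference is that you phrase the charts via graphs of linear maps $R^S\to R^{S^c}$, while the paper works with $n\times k$ matrices modulo the $\GL_k$-action and normalises the invertible block to $I_k$; these are the same charts.
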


\begin{proof}
We can assume $V=R^n$. The type of $k$-dimensional subspaces of $R^n$ is the type of $n\times k$ matrices of rank $k$ quotiented by the natural action of $\GL_k$. For all $k\times k$ minor, we consider the open proposition stating this minor is non-zero, which well defined as it is invariant under the $\GL_k$-action. This gives a finite open cover of $\Gr_k(R^n)$.

Let us show any piece is affine. For example consider the piece of matrices of the form:
\[\begin{pmatrix}
P & N
\end{pmatrix}\]
where $P$ is invertible of size $k\times k$. Any orbit in this piece has a unique element of the form:
\[\begin{pmatrix}
I_k & N'
\end{pmatrix}\]
where $I_k$ is the identity matrix, so this piece is equivalent to $R^{(n-k)k}$.
\end{proof}

\begin{lemma}\label{being-ideal-in-azumaya-closed}
For all $A:\AZ_n$ and $I:\Gr_{n+1}(A)$, we have that $I$ being a right ideal in $A$ is a closed proposition.
\end{lemma}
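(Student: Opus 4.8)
The plan is to exhibit the right-ideal condition as the vanishing of finitely many elements of $R$, which is precisely what it means for a proposition to be closed. Since being a right ideal and being closed are both mere propositions, throughout I may work with merely-existing data. By \Cref{azumayas-are-finite-free} the module $A$ is finite free, say of rank $N=(n+1)^2$; fix module generators $e_1,\dots,e_N$. A point $I:\Gr_{n+1}(A)$ is by definition an $(n+1)$-dimensional subspace, hence a direct summand of $A$, so the quotient $A/I$ is a finite projective $R$-module (of rank $n(n+1)$) and we have a projection $\pi:A\to A/I$ with $\ker\pi=I$. Fix also module generators $v_1,\dots,v_{n+1}$ of $I$.

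First I would reduce the universally quantified condition to a finite check. By $R$-bilinearity of the multiplication of $A$, the subspace $I$ is a right ideal if and only if $v_i\cdot e_j\in I$ for all $i$ and all $j$, the products being computed using the algebra structure of $A$. Since a finite conjunction of closed propositions is again closed, it then suffices to prove that for each $w:A$ the proposition $w\in I$ is closed, and to apply this to the finitely many elements $w=v_i\cdot e_j$.

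This last point is the heart of the argument. Using the projection, $w\in I$ is equivalent to $\pi(w)=0$ in $A/I$. As $A/I$ is finite projective it is a direct summand of some $R^m$; writing $(\pi(w))_1,\dots,(\pi(w))_m$ for the resulting coordinates in $R$, the vanishing $\pi(w)=0$ is equivalent to the proposition $(\pi(w))_1=0\wedge\dots\wedge(\pi(w))_m=0$, which is closed by definition.

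The main obstacle is exactly this third step: converting membership in the subspace $I$ into a genuinely closed condition, rather than merely a locally closed one obtained by working on an affine chart of the Grassmannian. What makes it go through is that a point of $\Gr_{n+1}(A)$ is a direct summand of $A$, so that the quotient $A/I$ is finite projective and the vanishing of an element of a finite projective module is a finite conjunction of equations in $R$.
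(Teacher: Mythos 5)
Your proof is correct and follows essentially the same route as the paper: reduce the right-ideal condition by bilinearity to the finitely many memberships $v_i\cdot e_j\in I$, then observe that membership in $I$ is a closed proposition because $I$ is a direct summand of the finite free module $A$. The only cosmetic difference is that you phrase the last step via the finite projective quotient $A/I$ sitting inside some $R^m$, whereas the paper extends a basis of $I$ to a basis of $A$ (using that $R$ is local) and reads off membership as the vanishing of the complementary coordinates.
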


\begin{proof}
By \Cref{azumayas-are-finite-free} we have that $A$ is finite free as a module. Consider $a_0,\cdots,a_n$ a basis of $I$ and extend it to a basis of $A$ adding $b_1,\cdots,b_l$. We can proceed as if $R$ was a field because a non zero vector has a non-zero coefficient \cite{draft}.

For any $a:A$, we have that $a\in I$ is a closed proposition as it says that the $b_1,\cdots,b_l$ coordinates of $a$ are zero. 

Then $I$ is an ideal if and only if for any $a$ in the chosen basis of $A$ and any $i$ in the chosen basis of $I$ we have that $ai\in I$, which is a closed proposition.
\end{proof}

\begin{lemma}\label{severi-brauer-are-schemes}
For all $A:\AZ_n$ we define:
\[\RI(A) := \{I:\Gr_{n+1}(A)\ |\ I\ \mathrm{is\ a\ right\ ideal}\}\]
Then $\RI(A)$ is a scheme.
\end{lemma}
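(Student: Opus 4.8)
The plan is to realise $\RI(A)$ as a closed subtype of a scheme and then invoke the general fact that closed subtypes of schemes are again schemes.

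First, by \Cref{azumayas-are-finite-free} the underlying module of $A$ is finite free, so \Cref{grassmanians-are-schemes} gives that $\Gr_{n+1}(A)$ is a scheme. By construction $\RI(A)$ is the subtype of $\Gr_{n+1}(A)$ consisting of those $I$ for which ``$I$ is a right ideal'' holds, and by \Cref{being-ideal-in-azumaya-closed} this predicate is pointwise a closed proposition. Hence $\RI(A)$ is a closed subtype of the scheme $\Gr_{n+1}(A)$.

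It then remains to check that a closed subtype of a scheme is itself a scheme. I would argue this by taking a finite affine open cover $(U_i)_{i:I}$ of $\Gr_{n+1}(A)$, with each $U_i = \Spec(B_i)$ for a finitely presented algebra $B_i$. Restricting the closed predicate to $U_i$ yields a closed subtype of $\Spec(B_i)$, which is of the form $\Spec(B_i/J_i)$ for a finitely generated ideal $J_i$, hence again affine; moreover the open propositions witnessing the cover of $\Gr_{n+1}(A)$ restrict to open propositions witnessing a cover of $\RI(A)$ by these affine pieces. This exhibits a finite affine open cover of $\RI(A)$, so $\RI(A)$ is a scheme.

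The genuinely new content is carried entirely by the two preceding lemmas; the remaining step is the standard stability of schemes under passing to closed subtypes, which should be recorded in the treatment of schemes in \cite{draft}. The only point requiring a little care is verifying that the affine cover of the ambient Grassmannian restricts correctly to the closed subtype, and that a closed subscheme of an affine scheme is again affine --- both of which are routine in the synthetic setting, so I expect no serious obstacle here.
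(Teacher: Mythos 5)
Your proof is correct and follows exactly the paper's argument: combine \Cref{azumayas-are-finite-free}, \Cref{grassmanians-are-schemes}, and \Cref{being-ideal-in-azumaya-closed} to exhibit $\RI(A)$ as a closed subtype of a scheme, then invoke stability of schemes under closed subtypes. The paper simply cites that last stability fact without spelling it out, whereas you sketch the (routine, and correct) verification.
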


\begin{proof}
By \Cref{azumayas-are-finite-free} we have that $A$ is finite free as a module, so that by \Cref{grassmanians-are-schemes} we have that $\Gr_{n+1}(A)$ is a scheme, and then by \Cref{being-ideal-in-azumaya-closed} we have that $\RI(A)$ is closed in a scheme, so it is a scheme.
\end{proof}

\subsection{Quaternion algebras are Azumaya algebras}

In this section we assume $2\not=0$, and we take $T$ to be the étale sheafification.

\begin{definition}
Given $a,b:R^\times$, we define the quaternion algebra $Q(a,b)$ as the non-commutative algebra:
\[R[i,j]/(i^2=a,j^2=b,ij=-ji)\] 
\end{definition}

\begin{remark}
As a vector space, $Q(a,b)$ is of dimension $4$, generated by $1,i,j,ij$.
\end{remark}

\begin{remark}
By the change of variable $i\mapsto j$ and $j\mapsto i$ we get $Q(a,b) = Q(b,a)$.
\end{remark}

\begin{lemma}\label{quaternion-split}
For all $b:R^\times$, we have that $Q(1,b) = M_2(R)$.
\end{lemma}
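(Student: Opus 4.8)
The plan is to construct an explicit isomorphism $Q(1,b) \to M_2(R)$ by finding elements of $M_2(R)$ satisfying the defining relations of the quaternion algebra. Recall $Q(1,b)$ is generated by $i,j$ with $i^2 = 1$, $j^2 = b$, and $ij = -ji$. So first I would exhibit two matrices $I, J : M_2(R)$ with $I^2 = \mathrm{Id}$, $J^2 = b\cdot\mathrm{Id}$, and $IJ = -JI$.

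A standard choice works here. Since $i^2 = 1$, the matrix $I$ should be an involution distinct from $\pm\mathrm{Id}$; I would take
\[
I = \begin{pmatrix} 1 & 0 \\ 0 & -1 \end{pmatrix}, \qquad J = \begin{pmatrix} 0 & b \\ 1 & 0 \end{pmatrix}.
\]
One checks directly that $I^2 = \mathrm{Id}$, that $J^2 = b\cdot\mathrm{Id}$, and that $IJ = -JI$ (both products being $\begin{pmatrix} 0 & b \\ -1 & 0 \end{pmatrix}$ up to sign). By the universal property of $Q(1,b)$ as the algebra presented by these generators and relations, this data determines a unique $R$-algebra homomorphism $\varphi : Q(1,b) \to M_2(R)$ sending $1 \mapsto \mathrm{Id}$, $i \mapsto I$, $j \mapsto J$, and hence $ij \mapsto IJ$.

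It then remains to check $\varphi$ is an isomorphism. Since both sides are free $R$-modules of rank $4$ (with $Q(1,b)$ having basis $1,i,j,ij$ and $M_2(R)$ the standard matrix units), it suffices to verify that the images $\mathrm{Id}, I, J, IJ$ form a basis of $M_2(R)$. I would do this by writing the coordinate matrix of these four elements against the matrix-unit basis and checking that its determinant is a unit. Computing $IJ = \begin{pmatrix} 0 & b \\ -1 & 0 \end{pmatrix}$, one finds the change-of-basis matrix has determinant a unit multiple of $b$, which is invertible by hypothesis $b : R^\times$; this is where the assumption $b \in R^\times$ (and implicitly $2 \neq 0$, needed to separate $\mathrm{Id}$ from $I$ in the relevant combinations) enters. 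Hence $\varphi$ is an $R$-module isomorphism, and being an algebra map, it is an algebra isomorphism.

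The main obstacle is not conceptual but bookkeeping: verifying that the four image matrices are linearly independent over $R$ in a way that is constructively valid, i.e. that the determinant of the transition matrix is genuinely a unit rather than merely nonzero. Since we work over an arbitrary commutative ring $R$, I must ensure the argument uses invertibility of $b$ (and of $2$, which guarantees $\tfrac{1}{2}(\mathrm{Id}+I)$ and $\tfrac{1}{2}(\mathrm{Id}-I)$ behave as expected) rather than any field-theoretic cancellation. Once the transition determinant is identified as a unit, freeness of both modules makes the conclusion immediate.
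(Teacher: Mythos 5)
Your proposal is correct and is essentially identical to the paper's proof: the same matrices $I$ and $J$ are used, the relations are verified, and the conclusion follows from $\mathrm{Id},I,J,IJ$ forming a basis of $M_2(R)$ (the transition determinant being $\pm 4b$, a unit thanks to the section's standing hypothesis $2\neq 0$ and $b:R^\times$). The paper simply asserts the basis claim without the determinant bookkeeping, but the content is the same.
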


\begin{proof}
We send $i$ to:
\[I = \begin{pmatrix}
1 & 0\\
0 & -1\\
\end{pmatrix}\]
and $j$ to:
\[J = \begin{pmatrix}
0 & b\\
1 & 0\\
\end{pmatrix}\]
Then $IJ$ is:
\[K = \begin{pmatrix}
0 & b\\
-1 & 0\\
\end{pmatrix}\]
It is easy to check this define an algebra morphism, and since $1,I,J,K$ form a basis of $M_2(R)$ the map is an isomorphism.
\end{proof}

\begin{lemma}\label{quaternion-change-variable}
For all $a,b,u,v:R^\times$, we have that $Q(a,b) = Q(u^2a,v^2b)$.
\end{lemma}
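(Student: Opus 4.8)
The plan is to build an explicit algebra isomorphism by rescaling the generators, exactly in the spirit of the proof of \Cref{quaternion-split}. Since the equality of algebras here is governed by univalence (an isomorphism of $R$-algebras induces a path in the type of algebras), it suffices to exhibit such an isomorphism. Note first that both sides are well defined: as $a,u:R^\times$ we have $u^2a:R^\times$, and likewise $v^2b:R^\times$, so $Q(u^2a,v^2b)$ is a legitimate quaternion algebra.

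First I would work inside $Q(a,b)$ with its generators $i,j$ satisfying $i^2=a$, $j^2=b$, $ij=-ji$, and form the rescaled elements $ui$ and $vj$. A direct computation gives $(ui)^2=u^2i^2=u^2a$ and $(vj)^2=v^2j^2=v^2b$, while commuting the scalars past the generators yields $(ui)(vj)=uv\,ij=-uv\,ji=-(vj)(ui)$. Hence $ui$ and $vj$ satisfy precisely the defining relations of $Q(u^2a,v^2b)$. By the universal property of $Q(u^2a,v^2b)$ as the algebra generated by two elements subject to those relations, the assignment sending the generators of $Q(u^2a,v^2b)$ to $ui$ and $vj$ extends uniquely to an algebra homomorphism $\varphi:Q(u^2a,v^2b)\to Q(a,b)$.

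To see that $\varphi$ is an isomorphism I would track what it does to a basis: it sends the basis $1,i,j,ij$ of $Q(u^2a,v^2b)$ to the family $1,\,ui,\,vj,\,uv\,ij$ in $Q(a,b)$. Since $u$, $v$, and therefore $uv$, are units, this family is again an $R$-basis of $Q(a,b)$, so $\varphi$ is an $R$-linear bijection that respects multiplication, i.e. an algebra isomorphism. By univalence this produces the desired equality $Q(a,b)=Q(u^2a,v^2b)$.

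The argument is essentially a routine basis-tracking computation, so there is no genuine obstacle; the only point requiring care is that $u$ and $v$ must be invertible for the rescaled generators to form a basis, which is exactly guaranteed by the hypothesis $u,v:R^\times$.
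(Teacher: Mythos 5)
Your proof is correct and is exactly the paper's argument: the paper's own proof is the one-line "use the variable change $i\mapsto ui$ and $j\mapsto vj$," and your write-up simply spells out the relation-checking and basis-tracking that makes this rigorous.
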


\begin{proof}
We use the variable change $i\mapsto ui$ and $j\mapsto vj$.
\end{proof}

\begin{lemma}
Given $a,b:R^\times$, we have that $Q(a,b)$ is an Azumaya algebra. 
\end{lemma}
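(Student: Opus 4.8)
The algebra $Q(a,b)$ will be an Azumaya algebra of rank $1$, so I must check the two clauses of the definition: that the underlying type of $Q(a,b)$ is a sheaf, and that $\propTrunc{Q(a,b)=M_2(R)}_T$ holds. The first is immediate. As an $R$-module $Q(a,b)$ is free of rank $4$ on the basis $1,i,j,ij$, hence its underlying type is that of the affine scheme $R^4$, and schemes are sheaves (they are modal by the standing hypothesis on $T$, and here $T$ is \'etale sheafification, so \Cref{scheme-is-etale-sheaf} applies directly). All the work therefore lies in the mere, $T$-local splitting $\propTrunc{Q(a,b)=M_2(R)}_T$.

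The plan is to reduce this splitting to the existence of a square root of $a$. Suppose we are given $x:R$ with $x^2=a$. Since $a:R^\times$, such an $x$ is invertible, so setting $u:=x^{-1}:R^\times$ we get $u^2a=a^{-1}a=1$. Then \Cref{quaternion-change-variable} (with this $u$ and with $v:=1$) gives $Q(a,b)=Q(u^2a,b)=Q(1,b)$, and \Cref{quaternion-split} gives $Q(1,b)=M_2(R)$. Composing these identifications yields $Q(a,b)=M_2(R)$, hence an element of $\propTrunc{Q(a,b)=M_2(R)}$. In short: a root of the monic polynomial $g:=X^2-a$ produces the desired splitting.

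It remains to supply such a root after sheafification. The polynomial $g=X^2-a$ is monic, so by construction of the \'etale modality the proposition $\propTrunc{\Spec(R[X]/g)}$ belongs to the family that $T$ nullifies; consequently $\propTrunc{\Spec(R[X]/g)}_T$ is inhabited. The construction of the previous paragraph defines a function out of $\Spec(R[X]/g)$ — whose points are precisely the roots of $g$ — into the proposition $\propTrunc{Q(a,b)=M_2(R)}$, and hence a map $\propTrunc{\Spec(R[X]/g)}\to\propTrunc{Q(a,b)=M_2(R)}_T$. As the target is $T$-modal, this map factors through the $T$-localization $\propTrunc{\Spec(R[X]/g)}_T$, which is inhabited; applying the factored map to any of its elements produces the required element of $\propTrunc{Q(a,b)=M_2(R)}_T$.

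The only delicate point is the modal bookkeeping of the last paragraph: one uses that $T$, being nullification at the family $\{\propTrunc{\Spec(R[X]/g)}\}$ for $g$ monic, forces each such truncated spectrum to become true in the sheaf logic, together with the universal property of $T$-localization to extend a map out of $\propTrunc{\Spec(R[X]/g)}$ along its unit into the $T$-modal proposition $\propTrunc{Q(a,b)=M_2(R)}_T$. Note that, in accordance with the earlier remark, no separability of $X^2-a$ is required, only that it is monic; the standing hypothesis $2\neq 0$ enters instead through \Cref{quaternion-split}, where invertibility of $2$ is what makes $1,I,J,K$ a basis of $M_2(R)$.
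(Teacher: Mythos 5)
Your proof is correct and follows essentially the same route as the paper's: show $Q(a,b)$ is a sheaf because it is finite free, then observe that the splitting proposition is $T$-modal so one may assume a square root of $a$, and conclude via \Cref{quaternion-change-variable} and \Cref{quaternion-split}. The paper compresses your third and fourth paragraphs into the single phrase ``we can assume $\sqrt{a}$''; your explicit unfolding of the nullification argument (and the observation that $2\neq 0$ is what makes $1,I,J,K$ a basis in \Cref{quaternion-split}) is a faithful elaboration of the same argument.
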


\begin{proof}
We have that $Q(a,b)$ is finite free as a vector space so it is a sheaf. So $Q(a,b)$ being Azumaya is a sheaf and we can assume $\sqrt{a}$. Then by \Cref{quaternion-change-variable} we have $Q(1,b) = Q(\sqrt{a}^2,b) = Q(a,b)$ and we conclude by \Cref{quaternion-split}.
\end{proof}

\subsection{A remark on Azumaya algebras}

\begin{lemma}\label{MnR-endomorphism-multiplication}
For any $n:\N$, the map:
\[M_{n+1}(R)\otimes M_{n+1}(R)^{op}\to \mathrm{End}_R(M_{n+1}(R))\]
\[M\otimes N\mapsto (P\mapsto MPN)\]
is an equivalence.
\end{lemma}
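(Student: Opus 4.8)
The plan is to observe that both sides are free $R$-modules of the same rank and to check that the stated map carries an explicit basis of the source bijectively onto an explicit basis of the target. Writing $k = n+1$, the module $M_k(R) \otimes M_k(R)^{op}$ is free of rank $k^4$, with basis given by the tensors $E_{ij} \otimes E_{lm}$ of elementary matrices (the opposite structure affects only the multiplication, not the underlying module, so this remains a basis); and $\mathrm{End}_R(M_k(R))$ is the endomorphism module of a free module of rank $k^2$, hence free of rank $k^4$. Since an $R$-linear map between finite free modules that sends a basis to a basis is an isomorphism, and an isomorphism of modules is an equivalence, it suffices to identify the images of the basis tensors.

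The key computation is the elementary-matrix identity $E_{ij} E_{pq} E_{lm} = \delta_{jp}\delta_{ql} E_{im}$, which follows at once from $E_{ab}E_{cd} = \delta_{bc} E_{ad}$. Thus the endomorphism $\phi_{ij,lm} := (P \mapsto E_{ij} P E_{lm})$ sends the basis element $E_{jl}$ to $E_{im}$ and annihilates every other basis element $E_{pq}$. In other words, $\phi_{ij,lm}$ is precisely the elementary endomorphism (matrix unit) of $M_k(R)$, with respect to the basis $\{E_{pq}\}$, carrying the input slot $(j,l)$ to the output slot $(i,m)$.

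To finish, I would note that the assignment $(i,j,l,m) \mapsto \big((j,l),(i,m)\big)$ is a bijection from $\{1,\dots,k\}^4$ onto the set of pairs (input index, output index). Hence, as $(i,j,l,m)$ ranges over all tuples, the endomorphisms $\phi_{ij,lm}$ range over all the elementary endomorphisms of $M_k(R)$, which constitute a basis of $\mathrm{End}_R(M_k(R))$. Therefore the map sends the basis $\{E_{ij}\otimes E_{lm}\}$ bijectively to a basis of the target, and is an equivalence.

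There is no conceptual obstacle here; the only delicate point is purely bookkeeping, namely keeping the four indices straight and confirming that the reindexing $(i,j,l,m) \mapsto \big((j,l),(i,m)\big)$ is a bijection, so that no elementary endomorphism of the target is hit twice or missed. Everything else reduces to the single multiplication rule for elementary matrices.
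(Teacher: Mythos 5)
Your proof is correct and takes essentially the same route as the paper's: both exhibit the basis of elementary tensors $E_{i,j}\otimes E_{k,l}$ in the source and check, via $E_{i,j}E_{p,q}E_{k,l}=\delta_{j,p}\delta_{q,k}E_{i,l}$, that it is carried bijectively onto the basis of elementary endomorphisms of $M_{n+1}(R)$. The paper additionally remarks that the two sides have the same multiplication table, but for the stated claim (the map being an equivalence) your module-level basis argument suffices.
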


\begin{proof}
Let us denote by $(E_{i,j})_{0\leq i,j\leq n}$ the canonical basis of $M_{n+1}(R)$. We consider the basis: 
\[(E_{i,j}\otimes E_{k,l})_{0\leq i,j,k,l\leq n}\] 
of $M_{n+1}(R)\otimes M_{n+1}(R)^{op}$, as well as the basis:
\[(C_{i,j,k,l})_{0\leq i,j,k,l\leq n}\] 
of $\mathrm{End}_R(M_{n+1}(R))$ where $C_{i,j,k,l}(E_{j,k}) = E_{i,l}$ and $C_{i,j,k,l}$ is null on other element of the basis. It is clear that the morphism sends one basis to the other, and that both algebras have the same multiplication table. 
\end{proof}

\begin{lemma}
Assume $A:\AZ_n$, then $A$ is finite free as a module and the map $A\otimes A^{op}\to \mathrm{End}_R(A)$ sending $a\otimes b$ to $c\mapsto acb$ is an equivalence.
\end{lemma}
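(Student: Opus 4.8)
First I would dispatch the freeness claim, which is nothing more than \Cref{azumayas-are-finite-free}: for any $A:\AZ_n$ the underlying module of $A$ is finite free, and since $A$ is merely equal to $M_{n+1}(R)$ its rank is merely $(n+1)^2$. So the only real content is the second assertion.

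Write $f:A\otimes A^{op}\to \mathrm{End}_R(A)$ for the canonical map $a\otimes b\mapsto(c\mapsto acb)$. The proposition ``$f$ is an equivalence'' is a mere proposition, so the plan is to reduce to the split case $A=M_{n+1}(R)$ using the datum $\propTrunc{A=M_{n+1}(R)}_T$ supplied by the definition of $\AZ_n$. The decisive point is that this reduction is licit only if ``$f$ is an equivalence'' is itself a sheaf. Here I would use the freeness just proved: since $A$ is finite free, both $A\otimes A^{op}$ and $\mathrm{End}_R(A)$ are finite free $R$-modules, hence finite iterated products of $R$; as $R$ is a sheaf and sheaves are closed under products, the domain and codomain of $f$ are both sheaves. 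Because $T$ is a lex modality, modal types are closed under $\Pi$, $\Sigma$ and identity types, and therefore $\mathrm{isEquiv}(f)=\prod_y\mathrm{isContr}(\fib_f(y))$ is modal whenever its domain and codomain are. Thus ``$f$ is an equivalence'' is a sheaf.

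Granting this, the universal property of $\propTrunc{-}_T$ mapping into a modal proposition lets me replace $\propTrunc{A=M_{n+1}(R)}_T$ by an honest algebra identification $A=M_{n+1}(R)$. Since the assignment $A\mapsto f$ is defined uniformly in the algebra, transport along this identification carries $f$ to the corresponding map $M_{n+1}(R)\otimes M_{n+1}(R)^{op}\to \mathrm{End}_R(M_{n+1}(R))$, which is an equivalence by \Cref{MnR-endomorphism-multiplication}; as being an equivalence transports along paths, $f$ is an equivalence as well. The main obstacle is the middle paragraph, namely certifying that ``$f$ is an equivalence'' is a sheaf so that the convenient reduction to $M_{n+1}(R)$ becomes available. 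Everything there rests on the domain and codomain being sheaves, which is exactly why the freeness claim is established first, together with the closure properties of the lex modality $T$. Once the proposition is known to be a sheaf, the split case is precisely \Cref{MnR-endomorphism-multiplication} and demands no further computation.
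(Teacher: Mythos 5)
Your proposal is correct and follows exactly the paper's own argument: freeness via \Cref{azumayas-are-finite-free}, modality of ``$f$ is an equivalence'' from the finite freeness of $A\otimes A^{op}$ and $\mathrm{End}_R(A)$, reduction to $A=M_{n+1}(R)$, and conclusion by \Cref{MnR-endomorphism-multiplication}. You merely spell out in more detail why the domain and codomain being sheaves makes the equivalence statement modal, which the paper leaves implicit.
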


\begin{proof}
The fact that $A$ is finite free is \Cref{azumayas-are-finite-free}. Then both $A\otimes A^{op}$ and $\mathrm{End}_R(A)$ are finite free modules and therefore are $T$-modal, so that the map being an equivalence is $T$-modal and when proving it we can assume $A=M_{n+1}(R)$. Then we conclude by \Cref{MnR-endomorphism-multiplication}.
\end{proof}

%% \begin{remark}\label{azumaya-independent-modality}
%% We expect a converse in the sense that if $A$ is an algebra which is finite free as a module, such that the map $A\otimes A^{op} \to \mathrm{End}_R(A)$ is an equivalence, we should get $\propTrunc{A=M_{n+1}(R)}_{\mathrm{Et}}$. This would mean that given any modality $T$ such that:
%% \begin{itemize}
%% \item Schemes are $T$-modal.
%% \item The type of finite free modules is $T$-modal.
%% \item $T$-modal types are étale sheaves.
%% \end{itemize}
%% We have that an algebra $A$ is an Azumaya algebra for $T$ if and only if $A$ finite free as a module and the map $A\otimes A^{op} \to \mathrm{End}_R(A)$ is an equivalence. In particular Azumaya algebras do not depend on the choice of such a $T$.
%% \end{remark}

\subsection{The type $\SB_n$ of Severi-Brauer varieties}

\begin{definition}
A type $X$ is called a Severi-Brauer variety of dimension $n$ if $X$ is a sheaf and:
\[\propTrunc{X=\bP^n}_T\]
\end{definition}

We write $\SB_n$ the type of Severi-Brauer varieties of dimension $n$. We will see later that every Severi-Brauer variety is a scheme.

\begin{lemma}\label{right-ideal-of-matrices-are-projective}
Consider the map:
\[\delta:\bP^n \to \RI(M_{n+1}(R))\]
sending $(x_0:\cdots:x_n):\bP^n$ to:
\[\{M:M_n(R)\ |\ \forall i,j.\ x_i\cdot M_j = x_j\cdot M_i\}\]
where $M_i$ is the $i$-th line of $M$. Then $\delta$ is an equivalence.
\end{lemma}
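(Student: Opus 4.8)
The plan is to reinterpret $\delta(x)$ module-theoretically and then prove it is an equivalence by exhibiting an inverse on a standard open cover. First I would unwind the defining condition: writing $L_x\subseteq R^{n+1}$ for the line spanned by $x$, the condition $x_iM_j=x_jM_i$ (with $M_i$ the $i$-th row) says exactly that every column of $M$ lies in $L_x$, so that $\delta(x)=\{M:\mathrm{im}(M)\subseteq L_x\}=\mathrm{Hom}(R^{n+1},L_x)$, viewed inside $\mathrm{End}(R^{n+1})=M_{n+1}(R)$. This makes three things immediate: it is a right ideal (right multiplication is precomposition, which can only shrink the image); it is a rank-$(n+1)$ direct summand of $M_{n+1}(R)$, since $L_x$ is a free rank-$1$ direct summand of $R^{n+1}$ and hence $\mathrm{Hom}(R^{n+1},L_x)$ is a free rank-$(n+1)$ direct summand; and therefore $\delta$ indeed lands in $\RI(M_{n+1}(R))$. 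It also reveals the inverse: $L_x$, equivalently the point $x:\bP^n$, should be recovered from an ideal $I$ as its total column space.

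Since $\bP^n$ and $\RI(M_{n+1}(R))$ are schemes, in particular sets, it suffices to prove $\delta$ is injective and surjective. Injectivity is immediate from the previous paragraph: $\delta(x)=\delta(x')$ forces $L_x=L_{x'}$, both being the total column space of the common ideal, hence $x=x'$ in $\bP^n$. For surjectivity I would cover $\RI(M_{n+1}(R))$ by the opens $V_{i,j}$, defined as the (open) condition that the $R$-linear map $I\to R,\ M\mapsto M_{i,j}$ is surjective, equivalently that some $M:I$ has $M_{i,j}\in R^\times$. These cover $\RI(M_{n+1}(R))$ because any $I$ of rank $n+1\geq 1$ contains a basis element, which is a unimodular vector of $M_{n+1}(R)=R^{(n+1)^2}$ and so has some invertible entry. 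Since being surjective (contractibility of the fibres being a proposition) may be checked over any open cover of the target, it is enough to hit every $I$ lying in some $V_{i,j}$.

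On $V_{0,0}$, say, I would argue concretely: choose $M^0:I$ with $M^0_{0,0}\in R^\times$ and set $x:=M^0e_0$, the $0$-th column, which is unimodular since $x_0=M^0_{0,0}$ is invertible, so $x:\bP^n$. Using that $I$ is a right ideal, $M^0E_{0,0}=xe_0^{\top}$ lies in $I$, and right-multiplying $xe_0^{\top}$ by matrices with arbitrary $0$-th row produces every $xv^{\top}$, so $\delta(x)\subseteq I$; the remaining $V_{i,j}$ are handled identically after permuting indices. As $\delta(x)$ is a direct summand of $M_{n+1}(R)$ contained in $I$ of the same rank $n+1$, it is a direct summand of $I$ whose complement is finite projective of rank $0$, hence zero, giving $\delta(x)=I$ and thus surjectivity. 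I expect the only genuinely delicate point to be this last module-theoretic step — that a rank-$(n+1)$ direct summand contained in a rank-$(n+1)$ finite projective module is the whole module — which rests on the local-ring facts about finite free and finite projective modules from \cite{draft}; everything else is either a direct computation or an application of the sheaf/scheme structure.
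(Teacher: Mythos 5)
Your proof is correct and follows essentially the same route as the paper's: well-definedness by exhibiting $\delta(x)$ as an explicit free rank-$(n+1)$ summand, then injectivity, then surjectivity by locating $M\in I$ with an invertible entry, using the right-ideal property to get $\delta(x)\subseteq I$ for $x$ the corresponding column, and concluding by a rank count. Your reading $\delta(x)=\mathrm{Hom}(R^{n+1},L_x)$ and your explicit direct-summand argument for the final step $\delta(x)=I$ are in fact somewhat more careful than the paper, which at that point simply asserts that $n+1$ independent elements of the rank-$(n+1)$ module $I$ must span it.
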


\begin{proof}
Write $X=(x_0:\cdots:x_n)$. First we check $\delta$ is well defined. It is clear that for all $\lambda\not=0$ we have that:
\[\delta(\lambda X) = \delta(X)\]
and that $\delta(X)$ is a right ideal. To check the dimension assume $x_k\not=0$. Then $M\in\delta(X)$ if and only if for all $i$ we have that $M_i = \frac{x_i}{x_k} M_k$, which means giving $M\in\delta(X)$ is equivalent to giving $M_k$ in $R^{n+1}$, so $\delta(X)$ is free of dimension $n+1$.

Next we check injectivity. Assume given $(x_0:\cdots:x_n)$ and $(y_0:\cdots:y_n)$ in $\bP^n$ such that for all $M:M_n(R)$ we have:
\[(\forall i,j.\ x_i\cdot M_j = x_j\cdot M_i) \leftrightarrow (\forall i,j.\ y_i\cdot M_j = y_j\cdot M_i)\]
In particular considering the matrix $N$ such that $N_j = (y_j,\cdots,y_j)$ we get that:
\[\forall i,j.\ x_iy_j=x_jy_i\] 
so that:
\[(x_0:\cdots:x_n) = (y_0:\cdots:y_n)\]

Finally we check surjectivity. Assume $I:\RI(M_{n+1}(R))$, since $\propTrunc{I=R^{n+1}}$ we have $M\in I$ such that $M\not=0$, for example assume $M_{0,0}\not=0$. Then for all $k$ we have that:
\[ME_{0,k}\in I\]
meaning that we have $N^k\in I$ where:
\[N^k_{i,k} = M_{i,0}\]
and when when $j\not=k$
\[N^k_{i,j} = 0\]
Then since $M_{0,0}\not=0$ the matrices $N^k$ are linearly independent and since $I$ has dimension $n+1$, it is precisely the ideal spanned by the $N^k$. But this ideal is $\delta(M_{0,0}:\cdots:M_{n,0})$.
\end{proof}

\begin{lemma}
If $A$ is an Azumaya algebra, then $\RI(A)$ is a Severi-Brauer variety.
\end{lemma}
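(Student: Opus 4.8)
The plan is to unfold the definition of Severi-Brauer variety and discharge its two requirements separately: that $\RI(A)$ be a sheaf, and that $\propTrunc{\RI(A)=\bP^n}_T$ hold. The whole point of the preceding lemmas is that the second requirement follows from the Azumaya condition on $A$ by transporting it along the construction $\RI$.

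First I would handle the sheaf condition. By \Cref{severi-brauer-are-schemes} the type $\RI(A)$ is a scheme, and by our standing assumption on $T$ every scheme is modal; hence $\RI(A)$ is a sheaf with no further work.

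For the second requirement, I would build a map of types
\[(A=M_{n+1}(R))\to(\RI(A)=\bP^n).\]
Given a path $p:A=M_{n+1}(R)$ — which under univalence is exactly an isomorphism of $R$-algebras — applying $\RI$ functorially yields $\mathrm{ap}_{\RI}(p):\RI(A)=\RI(M_{n+1}(R))$. By \Cref{right-ideal-of-matrices-are-projective} the map $\delta$ is an equivalence $\bP^n\simeq\RI(M_{n+1}(R))$, so univalence gives $\RI(M_{n+1}(R))=\bP^n$; composing produces the desired equality $\RI(A)=\bP^n$. Now I apply propositional truncation and then the modality $T$, both of which are functorial, to turn this into a map $\propTrunc{A=M_{n+1}(R)}_T\to\propTrunc{\RI(A)=\bP^n}_T$. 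Since $A$ is Azumaya the domain holds, so $\propTrunc{\RI(A)=\bP^n}_T$ holds as required.

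The only real care needed — and the step I would make most explicit — is the modality bookkeeping: one must use that $\propTrunc{-}_T$ is by definition $T$ applied to the propositional truncation, together with the functoriality of a lex modality, so that the map constructed at the level of bare equalities lifts through truncation and sheafification. Everything else (transporting $\RI$ along an algebra isomorphism, and invoking the already-established equivalence $\delta$) is immediate, so I do not expect a genuine obstacle here; the mathematical content lives entirely in \Cref{right-ideal-of-matrices-are-projective}.
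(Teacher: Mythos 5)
Your proposal is correct and matches the paper's argument: the sheaf condition via \Cref{severi-brauer-are-schemes} plus the assumption that schemes are modal, and the truncated equality by functorially transporting $\propTrunc{A=M_{n+1}(R)}_T$ through $\RI$ and concluding with \Cref{right-ideal-of-matrices-are-projective}. The paper's proof is just a terser statement of the same reduction, leaving the modality bookkeeping implicit.
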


\begin{proof}
By \Cref{severi-brauer-are-schemes} and the assumption that schemes are sheaves, we have that $\RI(A)$ is a sheaf. Then to prove:
\[\propTrunc{A=M_{n+1}(R)}_T \to \propTrunc{\RI(A)=\bP^n}_T\]
it is enough to prove:
\[\RI(M_{n+1}(R)) = \bP^n\]
which is \Cref{right-ideal-of-matrices-are-projective}.
\end{proof}

\subsection{Conics are Severi-Brauer varieties}

In this section we assume $2\not=0$, and we take $T$ to be the étale sheafification.

\begin{definition}
Given $a,b:R^\times$, we define the conic $C(a,b)$ as the set of $[x:y:z]:\bP^2$ such that $x^2=ay^2+bz^2$.
\end{definition}

\begin{lemma}\label{pointed-conics-projective}
Assume $a,b:R^\times$ such that $\propTrunc{C(a,b)}$, then $\propTrunc{C(a,b)=\bP^1}$.
\end{lemma}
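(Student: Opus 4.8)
The plan is to construct, from a point of the conic, an explicit isomorphism with $\bP^1$ by stereographic projection. Since the goal $\propTrunc{C(a,b)=\bP^1}$ is a proposition, I may eliminate the truncation in the hypothesis and assume an honest point $p:C(a,b)$, with chosen representative $(x_0,y_0,z_0):R^3$ satisfying $q(x_0,y_0,z_0)=0$, where $q(x,y,z)=x^2-ay^2-bz^2$ is the defining quadratic form and $B$ its associated symmetric bilinear form $B((x,y,z),(x',y',z'))=xx'-ayy'-bzz'$, so that $B(p,p)=q(p)=0$. Since $p:\bP^2$, the submodule $\langle p\rangle$ is a rank-one direct summand and $R^3/\langle p\rangle$ is merely free of rank $2$, giving an identification $\bP^1 \simeq \bP(R^3/\langle p\rangle)$ of $\bP^1$ with the space of lines through $p$.

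First I would build the parametrisation $\phi:\bP^1\to C(a,b)$. A line through $p$ in direction $[w]$ meets the conic where $q(p+tw)=2tB(p,w)+t^2q(w)=0$, whose nonzero root yields the second intersection point; this gives the closed formula $[w]\mapsto [\,q(w)\,p-2B(p,w)\,w\,]$. A direct computation shows this expression is invariant under $w\mapsto w+\mu p$ and homogeneous of degree $2$ in $w$, so it descends to a map on $\bP(R^3/\langle p\rangle)\simeq\bP^1$; and expanding $B$ bilinearly, using $B(p,p)=0$, $B(p,w)=:d$ and $B(w,w)=q(w)=:c$, gives $q(c\,p-2d\,w)=-4cd^2+4d^2c=0$, so the image lands on $C(a,b)$. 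Conversely I would define the projection $\psi:C(a,b)\to\bP^1$ sending a point $u$ to its class modulo $p$ away from $p$, and extended over $p$ by the tangent direction, presented by explicit formulas over a finite open cover.

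Finally I would check that $\phi$ and $\psi$ are mutually inverse, concluding $C(a,b)\simeq\bP^1$ and hence $\propTrunc{C(a,b)=\bP^1}$. The main obstacle is carrying out this last verification uniformly over $R$ rather than over a field: one must ensure that the coordinates of $q(w)\,p-2B(p,w)\,w$ always generate the unit ideal, so that $\phi$ genuinely lands in $\bP^2$, and that the two composites are the identity, the delicate point being the locus where $q(w)$ and $B(p,w)$ vanish simultaneously. This is controlled by the smoothness of the conic — concretely that the discriminant of $q$ is the unit $ab$ up to sign, reflecting $a,b:R^\times$ — together with the observation that, since one cannot globally normalise $p$ to a single affine chart, bijectivity is best checked on a finite open cover of $\bP^1$ (respectively $C(a,b)$), using that being an equivalence is a local condition for schemes.
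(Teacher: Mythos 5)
Your overall strategy is the same as the paper's: eliminate the truncation (the goal being a proposition) and parametrise the conic by the pencil of lines through the given point. Your forward map $[w]\mapsto[q(w)\,p-2B(p,w)\,w]$ is correct and, written out in coordinates for $p=(1,y_0,z_0)$, is essentially the paper's map $\psi([u:v])=[au^2+bv^2:\cdots]$; the descent to $\bP(R^3/\langle p\rangle)\simeq\bP^1$ and the check that the image satisfies the conic equation are fine, and your appeal to nondegeneracy of $q$ (discriminant a unit) is the right reason why $q(w)$ and $B(p,w)$ cannot both vanish for $w$ nonzero modulo $p$: otherwise $q$ would vanish on the rank-two direct summand $\langle p,w\rangle$, which is impossible for a nondegenerate form on $R^3$.

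The genuine gap is in the inverse. ``Send $u$ to its class modulo $p$ away from $p$, and extend over $p$ by the tangent direction'' is not a construction over $R$: the loci $u\neq p$ and $u=p$ do not form an open cover of $C(a,b)$ (the second is closed, not open), and since $R$ only satisfies $\neg(x=0)\to x\in R^\times$ one cannot case-split on whether the class of $u$ modulo $p$ vanishes. You correctly note that explicit formulas on a finite open cover are required, but exhibiting that cover is precisely the technical content of the paper's proof: it takes the two opens $x+ay_0y+bz_0z\neq 0$ and $x-ay_0y-bz_0z\neq 0$ (the nonvanishing of $B(\tilde p,u)$ and of $B(p,u)$, where $\tilde p=(1,-y_0,-z_0)$ is a second rational point of the conic), proves by a short computation using $a,b:R^\times$ and locality that at least one holds at every point of the conic, gives an explicit formula for the inverse on each piece, and verifies that the two formulas agree on the overlap. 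None of these ingredients appears in your proposal, and the cover suggested by your wording does not exist as stated. (The final verification that the two composites are the identity is omitted in the paper as well, so that omission is not held against you.)
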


\begin{proof}
Let us assume $x_0,y_0,z_0$ such that $x_0^2 = ay_0^2+bz_0^2$. We can assume $x_0\not=0$ by possibly considering $C(a,b) = C(\frac{1}{a},-\frac{b}{a})$. Then we can clearly assume $x_0=1$ without loss of generality, so that $ay_0 + bz_0 = 1$.

 Let us consider the map:
\[\psi:\bP^1\to \bP^2\]
\[[u:v] \mapsto [au^2+bv^2: y_0(au^2-bv^2) + 2buvz_0 : z_0(au^2-bv^2) - 2auvy_0]\]

We want to define $\phi$ inverse to $\phi$. Assume $[x:y:z]:\bP^2$ such that $x^2=ay^2+bz^2$. 

Let us proof that either $x+ay_0y+bz_0z$ or $x-ay_0y-bz_0z$ is invertible, to do this it is enough to prove that either $x$ or $ay_0y+bz_0z$ is invertible. Assume $x=0$ and $ay_0y+bz_0z=0$, then we have a contradiction. Indeed $y$ or $z$ is invertible and $ay^2+bz^2=0$, so that $y$ and $z$ are invertible and $b = -a\frac{y^2}{z^2}$ and $y_0z=yz_0$. Moreover $y_0$ or $z_0$ is invertible, so that both $y_0$ and $z_0$ are invertible and $\frac{y}{z} = \frac{y_0}{z_0}$ which means $ay^2+bz^2=0$ implies $ay_0^2+bz_0^2=0$, a contradiction. 

If $x + ay_0y + bz_0z$ is invertible we define $\phi([x,y,z]) = [1:\frac{a(z_0y-y_0z)}{x + ay_0y + bz_0z}]$.

If $x - ay_0y - bz_0z$ is invertible we define $\phi([x,y,z]) = [\frac{b(z_0y-y_0z)}{x - ay_0y - bz_0z}:1]$.

This is well defined as if both are invertible then:
\[\frac{b(z_0y-y_0z)}{x - ay_0y - bz_0z}\times\frac{a(z_0y-y_0z)}{x + ay_0y + bz_0z} = \frac{ab(z_0y-y_0z)^2}{x^2 - (ay_0y + bz_0z)^2} = 1\]
because:
\[x^2 - (ay_0y + bz_0z)^2 = (ay^2+bz^2)(ay_0^2+bz_0^2) - (ay_0y + bz_0z)^2 = ab(z_0y-y_0z)^2\]
We omit the verification that this $\phi$ is indeed an inverse to $\psi$.
\end{proof}

\begin{lemma}\label{conic-one-split}
Assume given $b:R^\times$, then $\propTrunc{C(1,b) = \bP^1}$.
\end{lemma}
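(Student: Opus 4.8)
The plan is to reduce the statement to \Cref{pointed-conics-projective}, which already establishes that a conic which is merely inhabited is merely equal to $\bP^1$. Recall that $C(1,b)$ is the set of $[x:y:z]:\bP^2$ satisfying $x^2 = y^2 + bz^2$ (the case $a=1$ of the defining equation). So it suffices to produce a single point of $C(1,b)$, since this gives $\propTrunc{C(1,b)}$, and then instantiating \Cref{pointed-conics-projective} at $a=1$ yields $\propTrunc{C(1,b)=\bP^1}$ directly.

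The key observation is that $C(1,b)$ carries an obvious rational point, namely $[1:1:0]$. Indeed $1^2 = 1^2 + b\cdot 0^2$, so the equation is satisfied, and since the coordinate $x=1$ is invertible this is a genuine point of $\bP^2$ lying on the conic, independently of the value of $b$. Hence $\propTrunc{C(1,b)}$ holds, and \Cref{pointed-conics-projective} concludes.

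I expect essentially no obstacle here: all the real work has already been carried out in \Cref{pointed-conics-projective}, and the present lemma merely records that in the normalised case $a=1$ the conic is automatically pointed, so the hypothesis of the previous lemma is met for free. The only point requiring a moment's care is to confirm that the exhibited triple $(1,1,0)$ determines a legitimate element of $\bP^2$, which is immediate because its first coordinate is a unit.
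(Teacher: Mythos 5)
Your proof is correct and is exactly the paper's argument: exhibit the point $[1:1:0]$ on $C(1,b)$ and apply \Cref{pointed-conics-projective}. Nothing further is needed.
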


\begin{proof}
We just apply \Cref{pointed-conics-projective} to the point $[1,1,0]:C(1,b)$.
\end{proof}

\begin{lemma}\label{conic-change-variable}
Given $a,b,u,v:R^\times$ we have that $C(a,b) = C(u^2a,v^2b)$.
\end{lemma}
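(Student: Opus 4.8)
The plan is to mimic the change-of-variables argument used for quaternion algebras in \Cref{quaternion-change-variable}, now carried out on the projective plane. Since $u$ and $v$ lie in $R^\times$, scaling a coordinate by $u^{-1}$ or $v^{-1}$ is a well-defined operation, so I would consider the diagonal homography
\[\Phi:\bP^2\to\bP^2,\qquad [x:y:z]\mapsto [x:u^{-1}y:v^{-1}z]\]
together with its candidate inverse
\[\Psi:[x':y':z']\mapsto [x':uy':vz'].\]

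First I would check that $\Phi$ and $\Psi$ are well defined on $\bP^2$: each arises from a diagonal linear automorphism of $R^3$ with invertible diagonal entries $1,u^{-1},v^{-1}$ (resp.\ $1,u,v$), hence respects the scaling relation defining projective space; being mutually inverse linear maps, they induce mutually inverse maps on $\bP^2$.

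Next I would verify that $\Phi$ restricts to a map $C(a,b)\to C(u^2a,v^2b)$. Given $[x:y:z]$ with $x^2=ay^2+bz^2$, its image $[x:u^{-1}y:v^{-1}z]$ satisfies the defining equation of $C(u^2a,v^2b)$ precisely because $u^2a(u^{-1}y)^2+v^2b(v^{-1}z)^2=ay^2+bz^2=x^2$; running the same computation backwards shows that $\Psi$ restricts to $C(u^2a,v^2b)\to C(a,b)$. Thus $\Phi$ and $\Psi$ cut down to mutually inverse maps between the two conics, yielding an equivalence $C(a,b)\simeq C(u^2a,v^2b)$.

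Finally I would invoke univalence to turn this equivalence into the desired identification $C(a,b)=C(u^2a,v^2b)$. I do not expect any real obstacle: the only points requiring care are that the coordinate scalings are invertible, which is exactly the hypothesis $u,v:R^\times$, and that the defining equation transforms as claimed, both of which are immediate. The argument is strictly parallel to \Cref{quaternion-change-variable}, with the algebra isomorphism there replaced by the homography above.
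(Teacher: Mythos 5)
Your proposal is correct and is essentially the paper's proof: the paper also just performs the change of variables $y\mapsto uy$, $z\mapsto vz$, which is exactly your diagonal homography (in the inverse direction), and you have merely spelled out the routine verifications. No discrepancy to report.
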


\begin{proof}
Consider the change of variable $y\mapsto uy$ and $z\mapsto vz$.
\end{proof}

\begin{lemma}
Assume given $a,b:R^\times$, then $C(a,b)$ is a Severi-Brauer variety.
\end{lemma}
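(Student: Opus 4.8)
The plan is to follow the two-step pattern used for $Q(a,b)$ being Azumaya: first show that $C(a,b)$ is a sheaf, and then obtain $\propTrunc{C(a,b)=\bP^1}_T$ by reducing to the split conic after adjoining a square root of $a$. For the sheaf part, note that the defining equation $x^2=ay^2+bz^2$ is homogeneous of degree $2$, hence well defined on $\bP^2$, and cuts out a closed subtype. Since $\bP^2$ is a scheme and a closed subtype of a scheme is again a scheme, $C(a,b)$ is a scheme; schemes are modal by our standing assumption, so $C(a,b)$ is a sheaf.

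It then remains to prove $\propTrunc{C(a,b)=\bP^1}_T$. This proposition is $T$-modal by construction, and since $T$ is the étale sheafification the proposition $\propTrunc{\Spec(R[X]/(X^2-a))}$, asserting that $a$ merely has a square root, is among those we nullify ($X^2-a$ is monic, and with $2\neq 0$ and $a:R^\times$ it is unramifiable). Hence, exactly as in the quaternion case, when proving this $T$-modal proposition we may assume $\sqrt{a}$.

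Under the assumption $\sqrt{a}$, I would apply \Cref{conic-change-variable} to get $C(1,b)=C(\sqrt{a}^2,b)=C(a,b)$, and then \Cref{conic-one-split} to obtain $\propTrunc{C(1,b)=\bP^1}$, hence $\propTrunc{C(a,b)=\bP^1}$; composing with the unit into the sheafification yields $\propTrunc{C(a,b)=\bP^1}_T$, as required. I expect the only real obstacle to be the first step, namely making precise that the homogeneous vanishing condition defines a closed subscheme of $\bP^2$ and invoking the right results from \cite{sag-projective}; once $C(a,b)$ is known to be a sheaf, the reduction is entirely parallel to the quaternion argument and should be routine.
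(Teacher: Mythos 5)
Your proposal is correct and follows essentially the same route as the paper: reduce to the case where $a$ has a square root (legitimate because the remaining goal is an \'etale sheaf), then apply \Cref{conic-change-variable} and \Cref{conic-one-split}. The paper's own proof is terser and leaves the sheaf-hood of $C(a,b)$ implicit, whereas you spell it out via the closed-subscheme argument; that is a reasonable filling-in of the same argument, not a different approach.
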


\begin{proof}
Since a sheaf being a Severi-Brauer variety is an \'etale sheaf, we can assume $\sqrt{a}$ . Then by \Cref{conic-change-variable} we have that $C(1,b) = C(\sqrt{a}^2,b)=C(a,b)$ and we conclude by \Cref{conic-one-split}.
\end{proof}

\begin{remark}
We will see in \Cref{chatelet-theorem} that any merely inhabited Severi-Brauer variety is a projective space. 
\end{remark}

\section{The equivalence $\AZ_n\simeq \SB_n$ and Ch\^atelet's Theorem}
\subsection{Generalities on delooping in $T$-sheaves}

\begin{definition}
A type $A$ is $T$-connected if:
\[\forall(x,y:A).\ \propTrunc{x=y}_T\]
\end{definition}

The key intuition for the next lemma is that both $A$ and $B$ are deloopings of the same group in the topos of sheaves.

\begin{lemma}\label{deloopings-equivalence}
Assume $A,B$ pointed $T$-connected sheaves. Let $f:A\to B$ be a pointed map inducing an equivalence:
\[\Omega f : \Omega A \simeq \Omega B\]
Then $f$ is an equivalence.
\end{lemma}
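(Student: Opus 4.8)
The plan is to prove that $f$ is an equivalence by showing that each of its fibers is contractible, and to reduce everything to the single fiber over the basepoint. Recall that $f$ is an equivalence if and only if $\prod_{b:B}\mathrm{isContr}(\fib_f(b))$. Since $T$ is lex, the sheaves (= $T$-modal types) are closed under $\Sigma$, $\Pi$ and identity types, so for each $b:B$ the fiber $\fib_f(b)=\sum_{a:A}(f(a)=b)$ is a sheaf and the proposition $\mathrm{isContr}(\fib_f(b))$ is itself $T$-modal. As $B$ is $T$-connected, for every $b:B$ we merely have $b=b_0$ in the $T$-sense; because the goal is a $T$-modal proposition we may eliminate this truncation and assume $b=b_0$. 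Thus it suffices to prove that $F:=\fib_f(b_0)$ is contractible, where $F$ is pointed by $(a_0,p)$ with $p:f(a_0)=b_0$ the witness that $f$ is pointed.

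Next I would establish two facts about $F$. First, $F$ is $T$-connected: given a point $(a,q):F$, the goal $\propTrunc{(a,q)=(a_0,p)}_T$ is a $T$-modal proposition, so using that $A$ is $T$-connected I may assume $a=a_0$, and then a path $(a_0,q)=(a_0,p)$ amounts to a loop $\gamma:\Omega A$ with $\Omega f(\gamma)$ equal to the comparison of $q$ and $p$; such a $\gamma$ merely exists because $\Omega f$ is in particular surjective. Second, $\Omega F$ is contractible: this uses the standard identification
\[\Omega\big(\fib_f(b_0)\big)\ \simeq\ \fib_{\Omega f}(\mathrm{refl}_{b_0}),\]
obtained by computing a loop at $(a_0,p)$ as a loop $\gamma:\Omega A$ together with a proof that $\Omega f(\gamma)=\mathrm{refl}_{b_0}$; since $\Omega f$ is an equivalence, this fiber is contractible.

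Finally I would conclude using the fact that a $T$-connected type with contractible loop space is contractible. Indeed, for $x,y:F$ the type $x=y$ is merely $(a_0,p)=(a_0,p)=\Omega F$ by $T$-connectedness, and contractibility of $x=y$ is a proposition, so it follows from contractibility of $\Omega F$; together with $F$ being inhabited this gives $\mathrm{isContr}(F)$. It is worth stressing that this step really needs $\Omega F$ contractible, not merely $\pi_1$-triviality, so it is the correct loop-space statement rather than a naive Whitehead argument. The main obstacle, beyond this structural outline, is twofold: carrying out the path algebra behind $\Omega\,\fib_f\simeq\fib_{\Omega f}$ with the basepoint coherence $p$ handled correctly, and the bookkeeping needed to see that each proposition into which we eliminate a $T$-truncation is genuinely $T$-modal. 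Conceptually this is the internal expression of the hint: in the model of $T$-sheaves, $A$ and $B$ are deloopings of the isomorphic groups $\Omega A\simeq\Omega B$, and the computation above is exactly the verification that a delooping is determined by its loop space.
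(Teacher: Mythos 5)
Your proof is correct, but it follows a different decomposition than the paper's. You prove that every fiber is contractible, reducing to the fiber $F$ over the basepoint by $T$-connectedness of $B$, and then establish contractibility of $F$ by showing it is $T$-connected (using $T$-connectedness of $A$ plus surjectivity of $\Omega f$) and has contractible loop space (via the identification $\Omega\,\fib_f(b_0)\simeq\fib_{\Omega f}(\mathrm{refl})$, which is contractible since $\Omega f$ is an equivalence). The paper instead splits ``equivalence'' as ``embedding plus surjection'': first it shows $\mathrm{ap}_f:x=y\to f(x)=f(y)$ is an equivalence by noting this is a modal proposition and using $T$-connectedness of $A$ to reduce to the basepoint, where it is exactly the hypothesis on $\Omega f$; then, since $f$ is now an embedding, each fiber is a modal \emph{proposition}, so $T$-connectedness of $B$ reduces inhabitation of the fiber to the basepoint, witnessed by the pointing of $f$. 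The paper's route is shorter because the embedding step makes the fibers propositions, so no loop-space computation on the fiber and no ``$T$-connected with contractible loops implies contractible'' lemma are needed; your route is a correct and somewhat more self-contained rendering of the classical ``long exact sequence of the fibration'' argument, at the cost of the extra path algebra you acknowledge. Two minor remarks: closure of modal types under $\Sigma$, $\Pi$ and identity types holds for any modality, so lexness is not really what you are using there; and in your final step it is indeed essential, as you note, that you eliminate the $T$-truncation into the modal proposition $\mathrm{isContr}(x=y)$ rather than into the non-propositional type $x=y$ itself.
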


\begin{proof}
First we prove that $f$ is an embedding. We have to prove that for all $x,y:A$ the map:
\[\mathrm{ap}_f : x=y \to f(x)=f(y)\]
is an equivalence. Since $A$ and $B$ are sheaves so are their identity types, so $\mathrm{ap}_f$ being an equivalence is a sheaf, so by $T$-connectedness of $A$ we can assume $x$ and $y$ are the basepoint, in which case it is part of the hypothesis.

Now we prove it is surjective, indeed for any $x:B$ we have that $\fib_f(x)$ is a sheaf and a proposition so when proving it is inhabited we can assume $x$ is the basepoint of $B$ and give the basepoint of $A$ as antecedent.
\end{proof}

\subsection{Both $\Aut(M_{n+1}(R))$ and $\Aut(\bP^n)$ are $\PGL_{n+1}(R)$}

We need an well-known algebra result.

\begin{lemma}\label{finite-projective-free}
Let $M$ be a finite projective module, then $M$ is finite free.
\end{lemma}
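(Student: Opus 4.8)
The plan is to reduce the claim to the classical fact that a finitely generated projective module over a \emph{local} ring is free, which is available here because the base ring $R$ of synthetic algebraic geometry is local; this is an axiom of \cite{draft}, and it is exactly the structure we already exploited above when arguing ``as if $R$ were a field''. Since ``$M$ is finite free'' is a proposition, I may first unfold the mere data witnessing that $M$ is finite projective, namely an idempotent matrix $e:M_{k}(R)$ with $e^2=e$ together with an identification of $M$ with the image of $e$; equivalently, a splitting $R^{k}\simeq M\oplus N$ in which $N$ is the image of $1-e$.

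The heart of the argument is then a purely matricial statement: over a local ring every idempotent $e:M_{k}(R)$ is conjugate by some $P:\GL_{k}(R)$ to a standard idempotent $\mathrm{diag}(1,\dots,1,0,\dots,0)$. Granting this, conjugation by $P$ carries $\mathrm{im}(e)$ onto the span of the first few basis vectors, exhibiting $M$ as free of rank equal to the number of $1$'s. I would prove the matricial statement by induction on $k$, and this is where locality does the work: for the pivot entry $e_{11}$, the local-ring disjunction gives that $e_{11}$ or $1-e_{11}$ is invertible. In the first case the first column $(e_{11},\dots,e_{k1})$ of $e$ is a unimodular vector fixed by $e$, so it spans a free rank-one summand inside $\mathrm{im}(e)$ that I can split off by a change of basis, conjugating $e$ into a block $\mathrm{diag}(1,e')$ with $e'$ a smaller idempotent; in the second case I apply the same move to the idempotent $1-e$, splitting a free line off $\ker(e)$ instead and leaving $\mathrm{im}(e)$ untouched inside a free module of smaller rank. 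Either way the induction hypothesis finishes the reduction. Because being finite free is a proposition, the $\lor$-elimination needed to branch on the disjunction is legitimate.

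The main obstacle is carrying this out \emph{constructively}: since invertibility in $R$ is an open, undecidable condition, I cannot simply ``search for a nonzero, hence invertible, entry'' and must route every case split through the local-ring disjunction, while checking that the conjugating matrices assemble coherently so that the final identification lands on $M$ itself and not merely on an abstractly isomorphic free module. This is precisely the content of the constructive local-freeness theorem for finitely generated projective modules, so rather than reproduce the bookkeeping in full I would appeal to its treatment in \cite{lombardi-quitte}, verifying only that its hypotheses match what the synthetic setting provides.
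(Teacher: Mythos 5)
Your proof is correct and takes essentially the same route as the paper: the paper's entire proof is a one-line citation of the constructive local freeness theorem (IX.2.2 in Lombardi--Quitt\'e), noting that it crucially relies on $R$ being local, which is exactly the result you reduce to and ultimately cite. Your sketch of the underlying induction --- conjugating an idempotent matrix to a standard diagonal one by branching on whether $e_{11}$ or $1-e_{11}$ is invertible --- is a faithful account of what that cited theorem's proof does, so it adds detail rather than a different argument.
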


\begin{proof}
This is IX.2.2 in \cite{lombardi-quitte}, it crucially relies on $R$ being local.
\end{proof}

\begin{lemma}\label{fundamental-system-matrices}
Assume $e_{i,j}:M_{n+1}(R)$ for $0\leq i,j\leq n$ such that:
\[e_{i,j}e_{k,l} = \delta_{j,k}e_{i,l}\]
where $\delta_{j,k} = 1$ if $j=k$ and $0$ otherwise. Moreover assume:
\[e_{0,0}+\cdots+e_{n,n}=1\]
Then there exists $P:GL_{n+1}(R)$ such that:
\[e_{i,j} = PE_{i,j}P^{-1}\]
\end{lemma}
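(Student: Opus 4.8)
The plan is to let $M_{n+1}(R)$ act on $V=R^{n+1}$ in the usual way and to manufacture, out of the abstract units $e_{i,j}$, a new basis $(v_0,\dots,v_n)$ of $V$ on which the $e_{i,j}$ act exactly as the standard matrix units $E_{i,j}$ act on the canonical basis. The matrix $P:\GL_{n+1}(R)$ whose columns are $v_0,\dots,v_n$ will then conjugate the $E_{i,j}$ to the $e_{i,j}$, since two endomorphisms of $V$ that agree on a basis are equal.

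First I would exploit the idempotents. The relations $e_{i,j}e_{k,l}=\delta_{j,k}e_{i,l}$ make $e_{0,0},\dots,e_{n,n}$ a complete family of orthogonal idempotents, so $V=\bigoplus_i e_{i,i}V$, and each summand $e_{i,i}V$, being a direct summand of the free module $V$, is finite projective. The off-diagonal units provide mutually inverse maps $e_{i,j}\colon e_{j,j}V\to e_{i,i}V$ and $e_{j,i}\colon e_{i,i}V\to e_{j,j}V$, because $e_{i,j}e_{j,i}=e_{i,i}$ and $e_{j,i}e_{i,j}=e_{j,j}$; hence all the summands $e_{i,i}V$ are isomorphic as $R$-modules.

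The key step is to pin down that $e_{0,0}V\simeq R$. By \Cref{finite-projective-free} each finite projective module $e_{i,i}V$ is in fact finite free, and since they are pairwise isomorphic they share a common rank $r$; as their direct sum is $V=R^{n+1}$, additivity of rank gives $(n+1)r=n+1$, so $r=1$. This is exactly where locality of $R$ enters, through \Cref{finite-projective-free}, and it is the main obstacle: one must know that finite projective modules over the local ring $R$ are free and that the rank of a free module is well defined and additive over direct sums, so that the equation $(n+1)r=n+1$ can be solved for $r$.

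Finally, I would choose a basis vector $v_0$ of the rank-one free module $e_{0,0}V$ and set $v_i:=e_{i,0}v_0$ for $0\le i\le n$. Since $e_{i,0}$ is an isomorphism $e_{0,0}V\to e_{i,i}V$, each $v_i$ is a basis of the rank-one summand $e_{i,i}V$, so $(v_0,\dots,v_n)$ is a basis of $V=\bigoplus_i e_{i,i}V$. A direct computation with the multiplication table gives $e_{k,l}v_i=e_{k,l}e_{i,0}v_0=\delta_{l,i}e_{k,0}v_0=\delta_{l,i}v_k$, which is precisely how $E_{k,l}$ acts on the canonical basis. Letting $P$ be the invertible matrix sending the canonical basis to $(v_i)$, the endomorphisms $e_{k,l}$ and $PE_{k,l}P^{-1}$ agree on the basis $(v_i)$ and are therefore equal, yielding $e_{i,j}=PE_{i,j}P^{-1}$.
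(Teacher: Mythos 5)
Your proposal is correct and follows essentially the same route as the paper: decompose $R^{n+1}$ by the orthogonal idempotents $e_{i,i}$, use \Cref{finite-projective-free} to see each summand is finite free of rank one (the paper phrases this as $V_0^{n+1}=R^{n+1}$ implying $\propTrunc{V_0=R}$, where you invoke additivity of rank, but it is the same point), then transport a generator of $V_0$ along the $e_{i,0}$ to build the basis defining $P$. No gaps.
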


\begin{proof}
We define $e_i = e_{i,i}$, then $e_0+\cdots+e_n = 1$, for all $i$ we have $e_i^2=e_i$ and for all $i\not=j$ we have that $e_ie_j=0$. From this we get:
\[R^{n+1} = V_0\oplus\cdots\oplus V_n\]
where:
\[V_i = \{x\ |\ e_i(x)=x\}\]
and:
\[e_{i,j}:V_j\simeq V_i\]

As a direct summand of a free module we have that $V_0$ is projective, and since $V_0 = e_{0}(R^{n+1})$ we have that $V_0$ is finitely generated, so by \Cref{finite-projective-free} it is finite free. From $V_0^{n+1}=R^{n+1}$ we get that $\propTrunc{V_0=R}$, and therefore that $\propTrunc{V_i=R}$ for all $i$.

Then we choose $v_0$ generating $V_0$ and define $v_i = e_{i,0}(v_0)$ so that $v_i$ generates $V_i$ because $e_{i,0}:V_0\simeq V_i$. We get a basis $v_0,\cdots,v_n$ of $R^{n+1}$.

Let $u_0,\cdots,u_n$ be the canonical basis of $R^{n+1}$ and define $P:GL_{n+1}(R)$ by sending $u_i$ to $v_i$. Then for all $v_k$ we have that:
\[e_{i,j}v_k = PE_{i,j}P^{-1}v_k\]
so we can conclude.
\end{proof}

\begin{proposition}\label{Aut-MnR-PGL}
The map:
\[\alpha:\PGL_{n+1}(R)\to\Aut(M_{n+1}(R))\]
\[P\mapsto (M\mapsto PMP^{-1})\]
is an equivalence.
\end{proposition}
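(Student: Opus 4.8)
The statement is the Skolem--Noether theorem for the split algebra $M_{n+1}(R)$: every $R$-algebra automorphism of $M_{n+1}(R)$ is inner. Since both $\PGL_{n+1}(R)$ and $\Aut(M_{n+1}(R))$ are sets, the plan is to show that $\alpha$ is injective and surjective. First I would check that $\alpha$ is well defined on the quotient: conjugation by a scalar matrix $\lambda I$ (with $\lambda:R^\times$) is the identity automorphism, so the map $P\mapsto(M\mapsto PMP^{-1})$ on $\GL_{n+1}(R)$ descends to $\PGL_{n+1}(R)$.

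For surjectivity, let $\phi:\Aut(M_{n+1}(R))$ and set $e_{i,j} := \phi(E_{i,j})$. Because $\phi$ is an algebra homomorphism, it preserves the relations $E_{i,j}E_{k,l} = \delta_{j,k}E_{i,l}$ and $\sum_i E_{i,i} = 1$, so the family $(e_{i,j})$ is a fundamental system of matrix units in the sense of \Cref{fundamental-system-matrices}. That lemma then provides $P:\GL_{n+1}(R)$ with $\phi(E_{i,j}) = P E_{i,j} P^{-1}$ for all $i,j$. Since the $E_{i,j}$ form an $R$-basis of $M_{n+1}(R)$ and both $\phi$ and $M\mapsto PMP^{-1}$ are $R$-linear, they agree on all of $M_{n+1}(R)$, i.e. $\alpha([P]) = \phi$. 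As membership in the fiber $\fib_\alpha(\phi)$ is a proposition and the $P$ obtained from the lemma is only a mere existence, this is exactly what is needed to conclude $\propTrunc{\fib_\alpha(\phi)}$.

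For injectivity, I would assume $\alpha([P]) = \alpha([Q])$, so that $PMP^{-1} = QMQ^{-1}$ for every $M$. Writing $C = Q^{-1}P$, this says $C$ commutes with every matrix, i.e. $C$ is central in $M_{n+1}(R)$. A direct computation shows the center consists of scalar matrices: commuting with the idempotents $E_{i,i}$ forces $C$ to be diagonal, and commuting with the $E_{i,j}$ forces its diagonal entries to coincide, so $C = \lambda I$ with $\lambda:R^\times$ (invertible since $C$ is). Hence $P = \lambda Q$ and $[P] = [Q]$ in $\PGL_{n+1}(R)$. The genuine mathematical content lies in \Cref{fundamental-system-matrices}, whose constructive proof already handles the delicate point of splitting $R^{n+1}$ into free rank-one summands; given that lemma, I expect the only points requiring care here to be the well-definedness on the quotient and the constructive ``central implies scalar'' computation, both of which are routine.
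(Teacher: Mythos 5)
Your proposal is correct and follows essentially the same route as the paper: surjectivity via transporting the matrix units $E_{i,j}$ along the automorphism and invoking \Cref{fundamental-system-matrices}, and injectivity via the computation that a matrix commuting with all $E_{i,j}$ is an invertible scalar. Your extra remarks on well-definedness on the quotient and on the propositional-truncation status of the fiber are sound but add nothing beyond what the paper's argument already implicitly handles.
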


\begin{proof}
It is clearly a group morphism. 

For injectivity we just need to check that if for all $M:M_{n+1}(R)$ we have $PMP^{-1}=M$ then there exists $\lambda\not=0$ such that $P=\lambda I_{n+1}$. We deduce this from $Pe_{i,j} = e_{i,j}P$ and $P$ invertible.

For surjectivity consider $e_{i,j}=\sigma(E_{i,j})$, we can apply \Cref{fundamental-system-matrices} to get $P:GL_{n+1}(R)$ such that:
\[\sigma(E_{i,j}) = PE_{i,j}P^{-1}\]
from which we conclude that for all $M:M_{n+1}(R)$ we have that:
\[\sigma(M) = PMP^{-1}\]
as desired.
\end{proof}

\begin{proposition}\label{Aut-Pn-PGL}
The map:
\[\beta:\PGL_{n+1}(R)\to\Aut(\bP^n)\]
\[X\mapsto PX\]
is an equivalence.
\end{proposition}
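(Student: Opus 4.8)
The plan is to establish that $\beta$ is an equivalence by the same three-step pattern already used for $\alpha$ in \Cref{Aut-MnR-PGL}. Since $\bP^n$ is a scheme and hence a set, its automorphism type $\Aut(\bP^n)$ is a set, so it suffices to show that $\beta$ is a bijective group homomorphism. First I would record that $\beta$ is well defined: for invertible $P$ the map $X\mapsto PX$ is an automorphism of $\bP^n$ with inverse $X\mapsto P^{-1}X$, and replacing $P$ by $\lambda P$ for $\lambda:R^\times$ yields the same map on points, so $\beta$ factors through $\PGL_{n+1}(R)$; functoriality $\beta(PQ)=\beta(P)\circ\beta(Q)$ is immediate.

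For injectivity, it is enough to show the kernel is trivial. Suppose $\beta(P)=\mathrm{id}$, that is, $PX$ and $X$ define the same point of $\bP^n$ for every $X$. Evaluating at the coordinate points $[e_0],\dots,[e_n]$ gives $Pe_i=\lambda_ie_i$ with each $\lambda_i:R^\times$, so $P$ is diagonal; evaluating at $[e_0+\cdots+e_n]$ then forces all the $\lambda_i$ to agree, so $P$ is a scalar matrix and hence trivial in $\PGL_{n+1}(R)$. This is the exact analogue of the injectivity argument for $\alpha$.

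Surjectivity is the only step with genuine content, and I would discharge it by invoking the classification of automorphisms of projective space proved in \cite{sag-projective}: every self-equivalence of $\bP^n$ is a homography, that is, merely of the form $X\mapsto PX$ for some $P:\GL_{n+1}(R)$. Since such a map is exactly an element in the image of $\beta$, and the ``merely'' supplied by the classification is precisely what surjectivity in the propositional-truncation sense requires, this finishes the argument. The main obstacle is therefore concentrated entirely in this surjectivity: establishing it from scratch is the synthetic analogue of the fundamental theorem of projective geometry, whereas well-definedness and injectivity are routine.

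Finally I would note an alternative route that clarifies why the difficulty cannot be sidestepped. Using the equivalence $\delta:\bP^n\simeq\RI(M_{n+1}(R))$ of \Cref{right-ideal-of-matrices-are-projective}, one checks that conjugation by $P$ sends the ideal $\delta(X)$ to $\delta(PX)$, so that $\beta$ is identified, through $\delta$, with the action on $\RI(M_{n+1}(R))$ induced by $\alpha(P)$. This exhibits $\beta$ as $\alpha$ transported along $\delta$ followed by the action of $\Aut(M_{n+1}(R))$ on right ideals; but proving that this action hits every automorphism of $\RI(M_{n+1}(R))$ is again equivalent to the homography classification, so the reformulation reorganises rather than removes the main obstacle.
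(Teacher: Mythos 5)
Your proposal is correct and takes essentially the same approach as the paper: the paper's entire proof is a citation of the main result of \cite{sag-projective} (every automorphism of $\bP^n$ is a homography), which is exactly where you concentrate the content, with the well-definedness and injectivity steps you spell out being routine. No discrepancy to report.
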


\begin{proof}
This is the main result from \cite{sag-projective}.
\end{proof}

\subsection{The Severi-Brauer construction is an equivalence}

\begin{proposition}\label{right-ideal-is-equivalence}
The map:
\[\RI:\AZ_n\to\SB_n\]
is an equivalence.
\end{proposition}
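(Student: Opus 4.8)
The plan is to apply the delooping lemma (\Cref{deloopings-equivalence}) to the map $\RI$. Both $\AZ_n$ and $\SB_n$ are canonically pointed, by $M_{n+1}(R)$ and $\bP^n$ respectively, and both are $T$-connected: any two Azumaya algebras (resp.\ Severi-Brauer varieties) are $T$-merely equal by composing, at the basepoint, the two witnesses of $\propTrunc{-=M_{n+1}(R)}_T$ (resp.\ $\propTrunc{-=\bP^n}_T$). They are also sheaves, since $T$ is lex the universe of sheaves is itself a sheaf, and the remaining defining data (the $R$-algebra structure, the sheaf condition, and the $T$-truncation) are modal. Moreover $\RI$ is a pointed map: by \Cref{right-ideal-of-matrices-are-projective} the equivalence $\delta$ identifies $\RI(M_{n+1}(R))$ with $\bP^n$. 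Thus it suffices to prove that $\Omega\RI$ is an equivalence.

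First I would identify the two loop spaces. Since the propositional data in the definitions of $\AZ_n$ and $\SB_n$ contribute nothing to identity types, univalence together with the structure identity principle for algebras gives
\[\Omega\AZ_n \simeq \Aut(M_{n+1}(R)), \qquad \Omega\SB_n \simeq \Aut(\bP^n).\]
By \Cref{Aut-MnR-PGL} and \Cref{Aut-Pn-PGL} both of these are identified with $\PGL_{n+1}(R)$, through the equivalences $\alpha$ and $\beta$. So it is enough to show that the composite
\[\PGL_{n+1}(R)\xrightarrow{\alpha}\Aut(M_{n+1}(R))\xrightarrow{\Omega\RI}\Aut(\bP^n)\]
equals $\beta$; since $\alpha$ and $\beta$ are equivalences this forces $\Omega\RI$ to be one.

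The main step, and the only real computation, is to make $\Omega\RI$ explicit and check this equality. Transport of a right ideal along an algebra automorphism $\sigma$ of $M_{n+1}(R)$ sends $I$ to its image $\sigma(I)$, so under $\delta$ the map $\Omega\RI$ sends $\sigma$ to the self-equivalence $X\mapsto \delta^{-1}(\sigma(\delta(X)))$ of $\bP^n$. Taking $\sigma=\alpha(P)$, i.e.\ $\sigma(M)=PMP^{-1}$, I would use the description of $\delta(X)$ as the set of matrices whose columns are all multiples of the column vector $x=(x_0,\dots,x_n)^{T}$, equivalently the outer products $xv$ for $v$ a row vector. Then $\sigma(xv)=P(xv)P^{-1}=(Px)(vP^{-1})$, and as $v$ ranges over all row vectors so does $vP^{-1}$; hence $\sigma(\delta(X))=\delta(Px)$. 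Since $Px$ represents $\beta(P)(X)$, this yields $\delta^{-1}(\sigma(\delta(X)))=\beta(P)(X)$, that is $\Omega\RI\circ\alpha=\beta$.

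With $\Omega\RI$ shown to be an equivalence, \Cref{deloopings-equivalence} applies to the pointed map $\RI$ between the pointed $T$-connected sheaves $\AZ_n$ and $\SB_n$, and we conclude that $\RI$ is an equivalence. I expect the delicate points to be bookkeeping rather than mathematical: verifying that the loop-space identifications are compatible with the pointing of $\RI$, so that the naturality square with $\alpha$ and $\beta$ really does commute, and confirming that transport of ideals along $\sigma$ is literally $I\mapsto\sigma(I)$. The algebraic identity $P\,\delta(X)\,P^{-1}=\delta(PX)$ is then immediate from the outer-product description.
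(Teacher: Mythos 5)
Your proposal is correct and follows essentially the same route as the paper: reduce via \Cref{deloopings-equivalence} to showing $\Omega\RI$ is an equivalence, identify both loop spaces with $\PGL_{n+1}(R)$ via $\alpha$ and $\beta$, and verify the commutation $P\,\delta(X)\,P^{-1}=\delta(PX)$. The only (cosmetic) difference is that you verify this identity using the outer-product description $\delta(X)=\{xv\}$, whereas the paper uses the bilinear characterisation of $\delta(X)$ together with an inclusion argument between finite free modules of equal dimension.
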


\begin{proof}
By \Cref{deloopings-equivalence} it is enough to prove that the top map in the triangle:
\begin{center}
\begin{tikzcd}
\Aut(M_n(R))\ar[rr,"\Omega\RI"] & & \Aut(\bP^n) \\
& \PGL_{n+1}(R)\ar[ru,swap,"\beta"]\ar[lu,"\alpha"] & \\
\end{tikzcd}
\end{center}
is an equivalence. But since the two other maps in the triangle are equivalences by \Cref{Aut-MnR-PGL} and \Cref{Aut-Pn-PGL}, it is enough to prove that the triangle commutes. To do this we need to check that for all $P:\PGL_{n+1}(R)$ we have that:
\[\delta^{-1}\circ \mathrm{ap}_\RI(\alpha(P))\circ\delta = \beta(P)\]
in $\Aut(\bP^n)$, with $\delta$ defined in \Cref{right-ideal-of-matrices-are-projective}. So we need to prove the following square commutes:
\begin{center}
\begin{tikzcd}
\RI(M_{n+1}(R))\ar[rr,"I\mapsto PIP^{-1}"]&& \RI(M_{n+1}(R)) \\
\bP^n\ar[u,"\delta"]\ar[rr,swap,"X\mapsto PX"]&& \bP^n\ar[u,swap,"\delta"] 
\end{tikzcd}
\end{center}
where $\mathrm{ap}_\RI$ was computed using path induction.

We see that:
\[\delta(Y) = \{M:M_n(R)\ |\ \forall A,B:R^{n+1}.\ A^tX\cdot B^tM = B^tX\cdot A^tM\}\]
To check that:
\[P\delta(X)P^{-1} = \delta(PX)\]
we just need to check an inclusion as both are finite free modules of the same dimension. Assume $M\in\delta(X)$, to check that $PMP^{-1}\in\delta(PX)$ we need to check that for all $A,B:R^{n+1}$ we have that:
\[A^tPX\cdot B^tPMP^{-1} = B^tX\cdot A^tPMP^{-1}\]
but since $M\in\delta(X)$ we have that:
\[(P^tA)^tX\cdot (P^tB)^tM = (P^tB)^tX\cdot (P^tA)^tM\]
which gives us what we want.
\end{proof}

\begin{remark}
By \Cref{severi-brauer-are-schemes} and \Cref{right-ideal-is-equivalence} we can conclude than any Severi-Brauer variety is a scheme. This was not clear a priori because being a scheme is not modal.
\end{remark}

\begin{remark}\label{severi-brauer-independent-modality}
By \Cref{right-ideal-is-equivalence} and \Cref{azumaya-independent-modality} and we can conclude that a type $X$ being a Severi-Brauer variety for any modality $T$ such that:
\begin{itemize}
\item Schemes are $T$-modal.
\item The type of finite free modules is $T$-modal.
\item $T$-modal types are étale sheaves.
\end{itemize}
is equivalent to $X$ being a Severi-Brauer variety for the étale topology. In particular Severi-Brauer varieties do not depend on the choice of such a $T$.
\end{remark}

\subsection{Ch\^atelet's Theorem}

\begin{lemma}\label{azumaya-with-right-ideal}
Assume $A:\AZ_n$ with $I:\RI(A)$, then:
\[A = \mathrm{End}_R(I)^{op}\]
\end{lemma}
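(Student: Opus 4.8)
The plan is to exhibit an explicit algebra homomorphism and then show it is an equivalence by reducing to the split case, exactly as in the proof that $A\otimes A^{op}\to\mathrm{End}_R(A)$ is an equivalence. Since $I$ is a right ideal, right multiplication preserves $I$, so for each $a:A$ the map $i\mapsto ia$ is an $R$-linear endomorphism of $I$; because right multiplication is an anti-homomorphism, this assembles into an algebra homomorphism
\[\phi:A\to\mathrm{End}_R(I)^{op},\qquad \phi(a)=(i\mapsto ia).\]
Indeed $\phi(ab)(i)=(ia)b$ agrees with the composite in $\mathrm{End}_R(I)^{op}$, which is why the $op$ is needed. By univalence it then suffices to prove that $\phi$ is an equivalence, and this yields the desired identification $A=\mathrm{End}_R(I)^{op}$.

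Next I would reduce to the split case. By \Cref{azumayas-are-finite-free} the algebra $A$ is finite free, and since $I:\Gr_{n+1}(A)$ it is finite free of rank $n+1$, so $\mathrm{End}_R(I)^{op}$ is finite free of rank $(n+1)^2$. Both the source and the target of $\phi$ are thus finite free modules, hence $T$-modal, so the proposition that $\phi$ is an equivalence is itself $T$-modal. Using $\propTrunc{A=M_{n+1}(R)}_T$ we may therefore assume $A=M_{n+1}(R)$; transporting $I$ along this identification and applying \Cref{right-ideal-of-matrices-are-projective}, we may write $I=\delta(X)$ for some $X=(x_0:\cdots:x_n):\bP^n$.

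It then remains to compute $\phi$ in this split case. Writing $x$ for the column $(x_0,\ldots,x_n)$, a matrix $M$ lies in $\delta(X)$ iff its rows satisfy $x_iM_j=x_jM_i$, which (choosing an invertible $x_k$) is equivalent to $M=xw^t$ for a unique $w:R^{n+1}$; this gives an $R$-linear isomorphism $\iota:R^{n+1}\simeq I$, $w\mapsto xw^t$. Since $(xw^t)a=x(a^tw)^t$, right multiplication by $a$ corresponds under $\iota$ to the endomorphism $w\mapsto a^tw$, so $\phi$ is identified with transposition $a\mapsto a^t:M_{n+1}(R)\to M_{n+1}(R)^{op}$, which is an algebra isomorphism and in particular an equivalence.

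The main obstacle is the modal bookkeeping in the reduction step: one must check that "$\phi$ is an equivalence" is $T$-modal (relying on $T$ being lex together with the finite freeness of both sides) so that passing to $A=M_{n+1}(R)$ is legitimate, and one must correctly transport the right ideal $I$ along this identification so that \Cref{right-ideal-of-matrices-are-projective} applies to it. The remaining work is the routine verification that right multiplication becomes matrix transposition under $\iota$, which I would not spell out in full.
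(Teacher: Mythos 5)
Your proposal is correct and follows essentially the same route as the paper: define the canonical map $a\mapsto(i\mapsto ia)$ into $\mathrm{End}_R(I)^{op}$, observe that both sides are sheaves so that being an equivalence is a $T$-modal proposition, reduce to $A=M_{n+1}(R)$, and then use \Cref{right-ideal-of-matrices-are-projective} to write $I=\delta(X)$ and identify the map explicitly via the isomorphism $R^{n+1}\simeq\delta(X)$. Your explicit identification of the split-case map with transposition $M\mapsto M^t$ is just a slightly more concrete phrasing of the paper's commutative square.
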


\begin{proof}
Since $I$ is an ideal, there is a canonical map of algebra:
\[A \to\mathrm{End}_R(I)^{op}\]
Since both algebras are sheaves (indeed $\propTrunc{I=R^{n+1}}$ implies $I$ is a sheaf), this map being an equivalence is a sheaf so we can assume $A=M_{n+1}(R)$.

By \Cref{right-ideal-of-matrices-are-projective} we can assume $X=(x_0:\cdots:x_n):\bP^n$ such that $I=\delta(X)$. There is a $k$ such that $x_k\not=0$, so we can assume $x_k=1$, then we have an isomorphism:
\[\theta:R^{n+1}\to I\]
sending $Y:R^{n+1}$ to the matrix $M$ with its $i$-th line $M_i=x_iY$. Then for all $M:M_n(R)$ we have a commutative square:
\begin{center}
\begin{tikzcd}
I\ar[rr,"N\mapsto NM"] && I \\
R^{n+1}\ar[u,"\theta"]\ar[rr,swap,"X\mapsto M^tX"] && R^{n+1}\ar[u,swap,"\theta"]\\
\end{tikzcd}
\end{center}
meaning the natural map:
\[ M_{n+1}(R)\to \mathrm{End}_R(I)^{op}\]
sends $M$ to $\delta^{-1}\circ M\circ\delta$, so it is an equivalence.
\end{proof}

\begin{theorem}[Ch\^atelet's Theorem]\label{chatelet-theorem}
Assume $X:\SB_n$, then:
\[\propTrunc{X}\to\propTrunc{X=\bP^n}\]
\end{theorem}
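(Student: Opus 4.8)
The plan is to exploit the equivalence $\RI:\AZ_n\to\SB_n$ from \Cref{right-ideal-is-equivalence} to turn this statement about Severi-Brauer varieties into a splitting statement about Azumaya algebras, for which a rational point supplies exactly the right ideal needed to split the algebra. Since the target $\propTrunc{X=\bP^n}$ is a mere proposition, it suffices to construct a map out of $X$ rather than out of $\propTrunc{X}$, so I would begin by assuming an honest point $x:X$.

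Next, using that $\RI$ is an equivalence, I would set $A:=\RI^{-1}(X):\AZ_n$, which comes equipped with an identification $\RI(A)=X$. Transporting $x$ backwards along this identification yields a right ideal $I:\RI(A)$, that is, an element $I:\Gr_{n+1}(A)$ which is a right ideal of $A$. Applying \Cref{azumaya-with-right-ideal} to the pair $(A,I)$ gives $A=\mathrm{End}_R(I)^{op}$. It then remains only to see that this forces $A$ to split: since $A$ is finite free by \Cref{azumayas-are-finite-free} and $I$ is an $(n+1)$-dimensional subspace of $A$, the description of the Grassmannian in \Cref{grassmanians-are-schemes} (subspaces as images of rank $n+1$ matrices) gives $\propTrunc{I=R^{n+1}}$. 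Hence $\mathrm{End}_R(I)^{op}$ is merely $M_{n+1}(R)^{op}$, which is isomorphic to $M_{n+1}(R)$ via transpose, so that $\propTrunc{A=M_{n+1}(R)}$. Applying $\mathrm{ap}_{\RI}$ together with \Cref{right-ideal-of-matrices-are-projective} then yields $\propTrunc{\RI(A)=\bP^n}$, i.e. $\propTrunc{X=\bP^n}$, as desired.

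I expect no deep obstacle, since the argument is essentially an assembly of the earlier results; the two points demanding care are both in the final step. First, because \Cref{azumaya-with-right-ideal} lands in the \emph{opposite} endomorphism algebra, one must pass through the transpose isomorphism $M_{n+1}(R)^{op}\cong M_{n+1}(R)$ rather than treating $\mathrm{End}_R(I)^{op}$ as $M_{n+1}(R)$ directly. Second, one should check that every identification used — the splitting $I=R^{n+1}$, the resulting $A=M_{n+1}(R)$, and the final $\RI(A)=\bP^n$ — may be taken merely, which is harmless precisely because the goal $\propTrunc{X=\bP^n}$ is a proposition.
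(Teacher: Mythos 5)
Your proposal is correct and follows essentially the same route as the paper: pass to $A=\RI^{-1}(X)$, use the point to get a right ideal, apply \Cref{azumaya-with-right-ideal} and the mere splitting $I=R^{n+1}$ to split $A$, and transport back along $\RI$ via \Cref{right-ideal-of-matrices-are-projective}. Your explicit mention of the transpose isomorphism $M_{n+1}(R)^{op}\cong M_{n+1}(R)$ is a small point of extra care that the paper leaves implicit.
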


\begin{proof}
By \Cref{right-ideal-is-equivalence} we can assume $X=\RI(A)$ for some $A:\AZ_n$. Then we can assume $I:\RI(A)$, so that by \Cref{azumaya-with-right-ideal} we have that:
\[A=\mathrm{End}_R(I)^{op}\]
Since we merely have that $I=R^{n+1}$, we merely have:
\[A = M_{n+1}(R)^{op} = M_{n+1}(R)\]
Applying \Cref{right-ideal-of-matrices-are-projective} we merely conclude:
\[X=\RI(A)=\RI(M_{n+1}(R)) = \bP^n\]
\end{proof}


\begin{thebibliography}{10}

\bibitem{azumaya51}
Gor\^o Azumaya.
\newblock On maximally central algebras.
\newblock {\em Nagoya Math. J.}, 2:119--150, 1951.

\bibitem{chatelet44}
Fran\c~cois Ch\^atelet.
\newblock {\em Variations sur un th\`eme de {H}. {P}oincar\'e}.
\newblock Annales scientifiques de l'\'Ecole Normale Sup\'erieure, S\'erie 3,
  Volume 61, 1944.

\bibitem{draft}
Felix {Cherubini}, Thierry {Coquand}, and Matthias {Hutzler}.
\newblock A foundation for synthetic algebraic geometry.
\newblock {\em Mathematical Structures in Computer Science}, 34(9):1008--1053,
  2024.

\bibitem{sag-projective}
Felix Cherubini, Thierry Coquand, Matthias Hutzler, and David Wärn.
\newblock Projective space in synthetic algebraic geometry.
\newblock 2024.

\bibitem{colliot88}
Jean-Louis Colliot-Th\'el\`ene.
\newblock Les grands th\`emes de {F}ran\c cois {C}h\^atelet.
\newblock {\em Enseign. Math. (2)}, 34(3-4):387--405, 1988.

\bibitem{coqazumaya}
Thierry Coquand, Henri Lombardi, and Stefan Neuwirth.
\newblock Constructive remarks on azumaya algebra, 2023.

\bibitem{gille2017central}
Philippe Gille and Tam{\'a}s Szamuely.
\newblock {\em Central simple algebras and Galois cohomology}, volume 165.
\newblock Cambridge University Press, 2017.

\bibitem{grothendieck68}
Alexander Grothendieck.
\newblock Le groupe de {B}rauer. {I}. {A}lg\`ebres d'{A}zumaya et
  interpr\'etations diverses.
\newblock In {\em Dix expos\'es sur la cohomologie des sch\'emas}, volume~3 of
  {\em Adv. Stud. Pure Math.}, pages 46--66. North-Holland, Amsterdam, 1968.

\bibitem{lombardi-quitte}
Henri Lombardi and Claude Quitt{\'{e}}.
\newblock {\em Commutative Algebra: Constructive Methods}.
\newblock Springer Netherlands, 2015.

\bibitem{hott}
The Univalent~Foundations Program.
\newblock {\em Homotopy type theory: Univalent foundations of mathematics}.
\newblock \url{https://homotopytypetheory.org/book}, 2013.

\bibitem{Quirin16}
Kevin Quirin.
\newblock {\em Lawvere-Tierney sheafification in Homotopy Type Theory.
  (Faisceautisation de Lawvere-Tierney en th{\'{e}}orie des types
  homotopiques)}.
\newblock PhD thesis, {\'{E}}cole des mines de Nantes, France, 2016.

\bibitem{modalities}
Egbert Rijke, Michael Shulman, and Bas Spitters.
\newblock {Modalities in homotopy type theory}.
\newblock {\em {Logical Methods in Computer Science}}, {Volume 16, Issue 1},
  January 2020.

\bibitem{serre62}
Jean-Pierre Serre.
\newblock Corps locaux.
\newblock Publications de l'{Institut} de {Math{\'e}matique} de
  l'{Universit{\'e}} de {Nancago}. 8; {Actualit{\'e}s} {Scientifiques} et
  {Industrielles}. 1296. {Paris}: {Hermann} \& {Cie}. 243 p., 1962.

\bibitem{wraith79}
G.~C. Wraith.
\newblock Generic {Galois} theory of local rings.
\newblock Applications of sheaves, {Proc}. {Res}. {Symp}., {Durham} 1977,
  {Lect}. {Notes} {Math}. 753, 739-767, 1979.

\end{thebibliography}
\end{document}